\numberwithin{equation}{section}
\newtheorem{Satz}{Theorem}[section]
\newtheorem{lemma}[Satz]{Lemma}
\newtheorem{proposition}[Satz]{Proposition}
\newtheorem{corollary}[Satz]{Corollary}
\newtheorem{theorem}[Satz]{Theorem}
\newtheorem{remark}[Satz]{Remark}
\newtheorem{Def}[Satz]{Definition}
\begin{document}

\newcommand{\C}{\mathbb{C}}
\newcommand{\N}{\mathbb{N}}
\newcommand{\R}{\mathbb{R}}
\newcommand{\Z}{\mathbb{Z}}
\newcommand{\hp}{\mathbb{P}}

\renewcommand{\AA}{\mathcal{A}}
\newcommand{\cB}{\mathcal{B}}
\newcommand{\DD}{\mathcal{D}}
\newcommand{\FF}{\mathcal{F}}
\newcommand{\HH}{\mathcal{H}}
\newcommand{\II}{\mathcal{I}}
\newcommand{\KK}{\mathcal{K}}
\newcommand{\LL}{\mathcal{L}}
\newcommand{\MM}{\mathcal{M}}
\newcommand{\PP}{\mathcal{P}}
\newcommand{\QQ}{\mathcal{Q}}
\newcommand{\RR}{\mathcal{R}}
\newcommand{\TT}{\mathcal{T}}
\newcommand{\cU}{\mathcal{U}}
\newcommand{\cG}{\mathcal{G}}
\newcommand{\hitr}{\HH\TT}
\newcommand{\cA}{{\mathcal A}}
\newcommand{\la}{\lambda}
\newcommand{\ve}{\varepsilon}
\newcommand{\vmo}{VMO($\R^n$)}
\newcommand{\vmom}{\mathrm{VMO}(\R^n)}
\newcommand{\bmo}{BMO($\R^n$)}
\newcommand{\bmom}{\mathrm{BMO}(\R^n)}
\newcommand{\vmoN}{VMO($\R^n; \C^{N \times N}$)}
\newcommand{\vmomN}{\mathrm{VMO}(\R^n; \C^{N \times N})}
\newcommand{\bmoN}{BMO($\R^n; \C^{N \times N}$)}
\newcommand{\bmomN}{\mathrm{BMO}(\R^n; \C^{N \times N})}
\newcommand{\Liloc}{L^1_{\rm loc}}
\newcommand{\Lpo}{L^p_\omega(\R^n)}
\newcommand{\Wmpo}{W^{m,p}_\omega(\R^n)}
\newcommand{\mint}[1]{{\int \!\!\!\!\!\! -}_{\!\!#1}\;}
\newcommand{\cz}{Calder\'on-Zygmund}
\newcommand{\vn}{vspace{.2cm}\noindent}
\newcommand{\dd}{{\rm d}}
\newcommand {\<}{\left\langle}
\renewcommand {\>}{\right\rangle}
\renewcommand {\=}[1]{\stackrel{\text{#1}}{=}}
\renewcommand {\Im}{{\rm Im}}
\renewcommand {\Re}{{\rm Re}}
\newcommand{\HR}{Hil\-bert\-raum}
\newcommand{\Hik}{$H^\infty$-calculus}
\newcommand{\K}{\ensuremath\mathbb{K}}
\newcommand{\vr}{\ensuremath\varrho}
\newcommand{\eps}{\ensuremath\varepsilon}
\newcommand{\id}{\ensuremath\mathrm{Id}}
\newcommand{\lap}{\ensuremath\Delta}
\newcommand{\dv}{\mbox{div}\,}
\newcommand{\grad}{\mbox{grad }}
\newcommand{\rot}{\mbox{rot} }
\newcommand{\curl}{\mbox{curl}\,}
\renewcommand{\L}{\mathscr L}
\newcommand{\supp}{{\mathrm {supp}\ }}
\newcommand{\We}{\mbox{We }}
\newcommand{\Rey}{\mbox{Re }}
\renewcommand{\d}{{\ \mathrm {d}}}
\newcommand{\bog}{Bogovski\u{\i}}
\newcommand{\dist}{\ensuremath\mathrm{dist}}
\newcommand{\ran}{\ensuremath\mathrm{ran}}
\newcommand{\diam}{\ensuremath\mathrm{diam}}
\newcommand{\hr}{{\R^n_+}}
\newcommand{\lpb}{{L^p(B(x_0,r))^N}}
\newcommand{\lpbl}{{L^p(B(x_0,|\lambda|^{-1/m}))^N}}
\newcommand{\lpbs}{{L^p(B(x_0,(s+1)r))^N}}
\newcommand{\lp}{{L^p(\R^n)^N}}
\newcommand{\linfb}{{L^\infty(B(x_0,r))^N}}
\newcommand{\linfbl}{{L^\infty(B(x_0,|\lambda|^{-1/m}))^N}}
\newcommand{\lpbsl}{{L^p(B(x_0,(s+1)|\lambda|^{-1/m}))^N}}
\newcommand{\loc}{\mathrm{loc}}
\newcommand{\nn}{\nonumber}

\newcommand{\ie}{\mathcal I_\varepsilon}
\def\vn{\vspace{.2cm}\noindent}
\def\cF{{\mathcal F}}
\def\mF{{\mathfrak F}}
\renewcommand{\theenumi}{\alph{enumi}}
\renewcommand{\labelenumi}{\theenumi)}

\title[Global existence results for Oldroyd-B Fluids]
{Global existence results for Oldroyd-B Fluids in exterior domains: The case of non-small
coupling parameters}

\author{Daoyuang Fang, Matthias Hieber and Ruizhao Zi}

\address{Department of Mathematics, Zhejiang University,
Hangzhou 310027, China}

\address{Technische Universit\"at Darmstadt, Fachbereich Mathematik,
Schlossgartenstr. 7, D-64289 Darmstadt, Germany and \hfill\break
Center of Smart Interfaces, Petersenstr. 32, D-64287 Darmstadt}

\address{Department of Mathematics, Zhejiang University,
Hangzhou 310027, China}

\email{dyf@zju.edu.cn}

\email{hieber@mathematik.tu-darmstadt.de}

\email{ruizhao3805@163.com}

\subjclass[2000]{35Q35,76D03,76D05}

\keywords{Oldroyd-B fluids, exterior domains, global solution}

\begin{abstract}
Consider the set of equations describing Oldroyd-B fluids in an exterior domain. It is shown
that this set of equations admits a unique, global solution in a certain function space provided
the initial data, but not necessarily the coupling constant, is small enough.
\end{abstract}

\maketitle

\vn
\section{Introduction}
\noindent
The theory of Oldroyd-B fluids recently gained
quite some attention. This type of fluids is  described by the following set of equations
\begin{eqnarray}\label{OB}
\begin{cases}
\begin{array}{rrll}
Re(\partial_tu+(u\cdot\nabla) u)-(1-\alpha)\Delta u+\nabla
p&=&\dv\tau&\mathrm{in}\ \ \Omega\times(0,T),\\
\dv u&=&0&\mathrm{in}\ \ \Omega\times(0,T),\\
We(\partial_t\tau+(u\cdot\nabla)\tau+g_a(\tau,
\nabla u))+\tau&=&2\alpha D(u)&\mathrm{in}\ \ \Omega\times(0,T),\\
u&=&0&\mathrm{on}\ \ \partial\Omega\times(0,T),\\
u(0)&=&u_0&\mathrm{in}\ \ \Omega,\\
\tau(0)&=&\tau_0&\mathrm{in}\ \ \Omega,
\end{array}
\end{cases}
\end{eqnarray}
where $\Omega \subset \R^3$ is a domain, $T\in(0,\infty]$ and  $Re$ and  $We$ denote
the Reynolds and Weissenberg number of the fluid, respectively. Moreover, the term
$g_a$ is given by $g_a(\tau, \nabla u):=\tau W(u)-W(u)\tau-a\left(D(u)\tau+\tau D(u)\right)$
for some $a\in[-1,1]$, where $D(u)=1/2(\nabla u+(\nabla u)^\top)$ denotes the
deformation tensor and $W(u)=1/2(\nabla u-(\nabla u)^\top)$ the vorticity tensor,
respectively.

This set of equations originally was introduced by  J.G. Oldroyd
\cite{Oldroyd}, whose intention it was to describe mathematically
viscoelastic effects of certain types of fluids.

The study  of the above set of equations started by a pioneering
paper by Guillop\'e and Saut in 1990, see \cite{GS90a}, who proved
for the  situation of  {\em bounded domains} $\Omega \subset \R^3$,
the existence of a local, strong solution to equation \eqref{OB} in
suitable Sobolev spaces $H^s(\Omega)$. Moreover, this solution
exists on $[0,\infty)$ provided the data as well as the coupling
constant $\alpha$ between the two equations are sufficiently small.
For extensions to this results to the $L^p$-setting, see the work of
Fernand\'ez-Cara, Guill\'en and Ortega \cite{FGG98}.

The existence of global {\em weak solutions} in the case of
$\Omega=\R^n$ was proved by Lions and Masmoudi in \cite{LM00} for
$a=0$. For extensions of this result to scaling invariant spaces of
the form $L^\infty_{\loc}([0,T);H^s(\R^3))$ for $s>3/2$, we refer to
the work of Chemin and Masmoudi \cite{CM01}. An improvement of the
Chemin-Masmoudi blow-up criterion was presented recently by Lei,
Masmoudi and Zhou in \cite{LMZ10}.

The situation of {\em infinite Weissenberg numbers}, mainly using
the Lagrangian setting,  was considered for $\Omega=\R^3$ or for
bounded domains $\Omega \subset\R^3$ by Lin, Liu and Zhang in
\cite{LLZ05}, Lei, Liu and Zhou in \cite{LLZ08} and Lin and Zhang in
\cite{LZ08}. Further results, describing in particular the two
dimensional situation, can be found in in \cite{LZ05}, \cite{Lei06}
and \cite{Lei10}.

The situation of {\em exterior domains} was considered first in \cite{HNS12}. There, the
existence of a unique, global solution defined in certain function spaces was proved
provided the initial data as well as the coupling constant $\alpha$ are small enough.

In this paper we improve the main result given in \cite{HNS12} to
the situation of {\em non-small coupling coefficients} $\alpha$.
Indeed, given an exterior domain $\Omega \subset \R^3$ with smooth
boundary, we prove the existence of a unique, global solution to
equation (\ref{OB}) only under the assumption that the intial data
$u_0$ and $\tau_0$ are sufficiently small in their natural norms.
The idea of our approach is not completely new and goes back to the
work of Molinet and Talhouk \cite{MT04}. The main idea is to take
the divergence in  the third equation in (\ref{OB}) and to control
$\hp\dv\tau$ in the $H^1$-norm by the corresponding term in $L^2$
and by $\curl\dv\tau$, also measured in the $L^2$-norm. For doing
this, we need in particular to extend the well known estimate
 for
the $H^1$-norm of a function $u$ by the $L^2$-norms of $u,\dv u,
\curl u$  and by $u\cdot\nu$ in the $H^{1/2}$-norm for bounded
domains to the situation of unbounded domains. Note that the main
difficulty in the case of  exterior domains is due to the failure of
Poincar\'{e}'s inequality in this situation.  Hence, we must treat
the lower order terms of $u$ and $\partial_tu$ in a new way and some
new higher order energy estimates for $u$ and $\partial_tu$ need to
be developed as well.

These higher order energy estimates then imply that the local solution is satisfying a certain
differential inequality which in turn implies that the local solution of \eqref{OB} can be
extended for all positive times.

It is worthwhile to mention that also our approach to obtain local
solutions to \eqref{OB} seems to be new and quite different from the
ones known in the literature. In fact, we base our local existence
argument not on Schauder's fixed point theorem as it has been done
in almost all of the existing works, but on a variant of Banach's
contraction principle, avoiding hereby unnecessary uniqueness
arguments.

Some words about the derivation of equation \eqref{OB} are in order.
In fact, incompressible fluids are subject to the following system
of equations
\begin{eqnarray}\label{velocity}
\begin{cases}
\begin{array}{rll}
\partial_t u+(u\cdot\nabla)u&=&\dv \sigma,\\
\dv u&=&0,
\end{array}
\end{cases}
\end{eqnarray}
where $u, \sigma$ are the velocity and stress tensor, respectively. Moreover, $\sigma$ can be
decomposed into $-p Id+\tau$, where $p$ denotes the pressure of the fluid and $\tau$  the
tangential part of the stress tensor, respectively. For the Oldroyd-B model, $\tau$ is given by
the relation
\begin{equation}\label{tauequation}
\tau+\lambda_1\frac{D_a\tau}{Dt}=2\eta\left(D(u)+\lambda_2\frac{D_aD(u)}{Dt}\right),
\end{equation}
where $\frac{D_a}{Dt}$ denotes the ``objective derivative"
$$
\frac{D_a\tau}{Dt}=\partial_t\tau+(u\cdot\nabla)\tau+g_a(\tau,\nabla u),
$$
and $g_a(\tau, \nabla u):=\tau W(u)-W(u)\tau-a\left(D(u)\tau+\tau D(u)\right)$ for some
$a\in[-1,1]$. Here $D(u)$ is the deformation tensor defined as above and
$W(u)$ denotes the vorticity tensor. The parameters
$\lambda_1>\lambda_2>0$ denote  the relaxation and retardation time, respectively.

The tangential part of the stress tensor $\tau$ can be decomposed as $\tau=\tau_N+\tau_e$
where $\tau_N=2\eta\frac{\lambda_2}{\lambda_1}D(u)$ corresponds to the Newtonian part and
$\tau_e$ to the purely elastic part. Here $\eta$ denotes the fluid viscosity.
Denoting $\tau_e=\tau$, the above equations can be rewritten as
\begin{eqnarray}\label{utau}
\begin{cases}
\begin{array}{rll}
\partial_t u+(u\cdot\nabla)u-\eta(1-\alpha)\Delta u+\nabla p&=&\dv\tau,\\
\dv u&=&0,\\
\tau+\lambda_1\frac{D_a\tau}{Dt}&=&2\eta\alpha D(u),
\end{array}
\end{cases}
\end{eqnarray}
with the retardation parameter $\alpha:=1-\frac{\lambda_2}{\lambda_1}\in (0,1)$.

Using dimensionless variables, Oldroyd-B fluids may be thus described by the equations
\eqref{OB}.

In order to formulate our main result, let $A$ be the Stokes
operator in $L^2_\sigma(\Omega)$ defined by
$$
Au:=-\hp\Delta u\ \mbox{ for  all } u \in D(A):=H^2(\Omega)\cap H^1_0(\Omega) \cap
L^2_\sigma(\Omega),
$$
where the space $L^2_\sigma(\Omega)$ is defined precisely later in Section 2.
Moreover, we set $V:=H_0^1(\Omega)\cap L^2_\sigma(\Omega)$.

Our main results then reads  as follows.

\begin{theorem}\label{global}
Let $\Omega \subset\R^3$ be an exterior domain with boundary $\partial\Omega$ of class
$C^3$ and let $J=[0,\infty)$. Then there exists $\ve_0>0$ such that if
\begin{equation}\label{small}
\|u_0\|_{D(A)}+\|\tau_0\|_{H^2(\Omega)}\leq\ve_0,
\end{equation}
then equation (\ref{OB}) admits a unique, global strong solution $(u,p,\tau)$ for all
$t\in J$ satisfying
\begin{eqnarray*}
u &\in& C_b(J;D(A)) \mbox{ with } \nabla u\in L^2(J;H^2) \mbox{ and }
u' \in L^2(J;V) \cap C_b(J;L^2_\sigma(\Omega)),\\
\nabla p&\in& L^2(J;H^1(\Omega))\cap L^\infty(J;H^1(\Omega)),\\
\tau &\in& C_b(J;H^2(\Omega))\cap L^2(J;H^2(\Omega)) \mbox{ with }
\tau'\in C_b(J;H^1(\Omega))\cap L^2(J;L^2(\Omega)).
\end{eqnarray*}
\end{theorem}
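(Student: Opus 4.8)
The plan is to combine a local existence theorem with an a priori estimate that is uniform in time, so that the local solution extends to all of $J$. First I would establish local well-posedness on some interval $[0,T_0)$ via the contraction-mapping variant announced in the introduction: linearize \eqref{OB} around zero, solving the Stokes system for $(u,p)$ with right-hand side $\dv\tau$ and the transport-type equation for $\tau$ with right-hand side $2\alpha D(u)$ coupled through $g_a$, and set up a fixed-point map on a ball in the space $C([0,T_0];D(A))\times(\cdots)\times C([0,T_0];H^2(\Omega))$. The maximal regularity of the Stokes operator $A$ in $L^2_\sigma(\Omega)$ (recall $A u=-\hp\Delta u$ on $D(A)=H^2\cap H^1_0\cap L^2_\sigma$) gives control of $u\in L^2(J;H^2)$ and $u'\in L^2(J;V)$ from $\dv\tau\in L^2(J;H^1)$, while the transport estimate for $\tau$ uses that $u$ is Lipschitz in space (via $D(A)\hookrightarrow W^{1,\infty}$ in dimension three through $H^2\hookrightarrow W^{1,\infty}$ fails by a hair, so one actually uses $H^2(\Omega)\hookrightarrow L^\infty$ and bounds on $\nabla u$ in $L^2\cap H^1$); the nonlinear terms $(u\cdot\nabla)u$, $(u\cdot\nabla)\tau$, $g_a(\tau,\nabla u)$ are estimated by the algebra/product inequalities in $H^1$ and $H^2$, yielding a contraction for $T_0$ small depending only on the size of the data.

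Second, and this is the heart of the matter, I would derive higher-order energy estimates that are \emph{closed} and independent of $\alpha$. Testing the momentum equation with $Au$ and $Au'$ and the stress equation with $\tau$, $\Delta\tau$, $\tau'$ gives the standard quadratic terms; the difficulty flagged in the introduction is that, unlike the bounded-domain case, Poincar\'e's inequality fails on an exterior domain, so the low-order norms $\|u\|_{L^2}$, $\|u'\|_{L^2}$ are not controlled by their gradients. The remedy is to keep these low-order quantities explicitly in the energy functional and absorb them using the dissipation $(1-\alpha)\|\nabla u\|^2$ together with the stress-relaxation damping $\|\tau\|^2$, and crucially to replace the missing Poincar\'e step by the div--curl estimate advertised in the introduction: taking $\dv$ of the third equation in \eqref{OB} and applying the Helmholtz projection, one controls $\|\hp\dv\tau\|_{H^1}$ by $\|\dv\tau\|_{L^2}$, $\|\curl\dv\tau\|_{L^2}$, and the boundary trace of $\dv\tau\cdot\nu$ in $H^{1/2}(\partial\Omega)$ — this is the extension of the classical $\|u\|_{H^1}\lesssim \|u\|_{L^2}+\|\dv u\|_{L^2}+\|\curl u\|_{L^2}+\|u\cdot\nu\|_{H^{1/2}(\partial\Omega)}$ to unbounded domains that the paper must prove earlier. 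Assembling all these contributions produces a differential inequality of the form $\frac{d}{dt}E(t)+c\,D(t)\le C\,E(t)^{1/2}D(t)$ for an energy $E$ comprising $\|u\|_{D(A)}^2$, $\|u'\|_{L^2}^2$, $\|\tau\|_{H^2}^2$, $\|\tau'\|_{H^1}^2$ and a dissipation $D$ comprising $\|\nabla u\|_{H^2}^2$, $\|u'\|_{V}^2$, $\|\tau\|_{H^2}^2$, $\|\tau'\|_{L^2}^2$, with constants $C,c$ independent of $\alpha\in(0,1)$.

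Third, the usual continuation argument closes the proof: if $E(0)\le\ve_0^2$ is small enough that $C\,E(0)^{1/2}<c/2$, then as long as $E(t)\le 4E(0)$ one has $\frac{d}{dt}E(t)+\tfrac{c}{2}D(t)\le 0$, whence $E(t)\le E(0)$ for all $t$ in the interval of existence and in fact $\int_0^t D\le \tfrac{2}{c}E(0)$; a standard bootstrap shows the bound $E(t)\le E(0)$ persists, contradicting maximality of any finite existence time, so the local solution extends to all of $J=[0,\infty)$. The regularity claims in the statement — $u\in C_b(J;D(A))$ with $\nabla u\in L^2(J;H^2)$ and $u'\in L^2(J;V)\cap C_b(J;L^2_\sigma)$, $\nabla p\in L^2(J;H^1)\cap L^\infty(J;H^1)$, and $\tau\in C_b(J;H^2)\cap L^2(J;H^2)$ with $\tau'\in C_b(J;H^1)\cap L^2(J;L^2)$ — are read off directly from the definitions of $E$ and $D$ and from elliptic regularity for the pressure ($\nabla p$ recovered from the momentum equation). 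I expect the main obstacle to be engineering the energy functional so that \emph{every} term, including the Poincar\'e-less low-order pieces and the pressure gradient, is absorbed by the available dissipation with $\alpha$-independent constants; the div--curl lemma on the exterior domain and a careful treatment of the $u'$ and $\tau'$ equations (differentiating \eqref{OB} in time and controlling the commutator terms $(u'\cdot\nabla)u$, $(u'\cdot\nabla)\tau$, $g_a(\tau',\nabla u)+g_a(\tau,\nabla u')$) are the technical crux.
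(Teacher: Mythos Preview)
Your overall strategy matches the paper's proof closely: local existence via a contraction argument, then the Molinet--Talhouk device of taking $\dv$ of the stress equation combined with the exterior-domain div--curl estimate (Proposition~\ref{propunb}) to close a differential inequality and run a continuation argument. Two points of precision are worth correcting. First, the div--curl lemma is applied not to $\dv\tau$ but to $\hp\dv\tau$: since $\dv(\hp\dv\tau)=0$ and $(\hp\dv\tau)\cdot\nu|_{\partial\Omega}=0$ by the very definition of the Helmholtz projection, and since $\curl(\hp\dv\tau)=\curl\dv\tau$ (curl annihilates the gradient part), Proposition~\ref{propunb} yields $\|\hp\dv\tau\|_{H^1}\le C(\|\hp\dv\tau\|_{L^2}+\|\curl\dv\tau\|_{L^2})$ with \emph{no} boundary term to handle---this is exactly why the projected quantity is the right one to track, and the paper derives separate evolution inequalities for $\|\hp\dv\tau\|_{L^2}^2$ and $\|\curl\dv\tau\|_{L^2}^2$ to feed into it. Second, the constants in the resulting differential inequality are not independent of $\alpha$; they involve factors like $(1-\alpha)^{-1}$ and $\alpha^{-1}$ throughout, and the actual inequality takes the form $\frac{d}{dt}F+G\le C\,H\,G$ with $H\lesssim F+F^2+F^3$ rather than a clean $E^{1/2}D$. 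The point of the theorem is only that $\alpha$ need not be small---$\ve_0$ is allowed to depend on $\alpha\in(0,1)$---so this does not affect your continuation argument, but you should not claim $\alpha$-uniformity.
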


\vn
\section{Preliminaries}
\noindent We start this section by recalling a higher order elliptic
regularity estimate for the stationary Stokes equations. A proof can
be found for example in \cite{Galdi}, Theorem \textbf{V} 4.7. In the
following, we denote by $\hat{H}^k(\Omega)$ the homogeneous Sobolev
spaces of order $k$.

\begin{lemma}\label{sS}
Let $m\in \{0, 1\}$ and $n\geq 3$. Assume that $\Omega\subset\R^n$ is an exterior domain with
boundary of class $C^{m+2}$ and $g\in H^m(\Omega)$. Then the equation
\begin{eqnarray}\label{eq1}
\left\{ \begin{array}{rl}
-\Delta u+\nabla p=g  & \mathrm{in}\ \ \Omega, \nonumber\\
\mathrm{div}u=0  & \mathrm{in}\ \ \Omega,\\
u=0 & \mathrm{on}\ \ \partial \Omega \nonumber
 \end{array}
 \right.
\end{eqnarray}
admits a solution $(u,p)\in\hat{H}^{m+2}(\Omega)\times\hat{H}^{m+1}(\Omega)$ which is unique
provided $\nabla u\in L^2(\Omega)$. In this case, there exists a constants $C>0$ such that
\begin{eqnarray}\label{sS_L^2}
\|\nabla^2 u\|_{H^m}+\|\nabla p\|_{H^m}\leq C(\|g\|_{H^m}+\|\nabla u\|_{L^2}).
\end{eqnarray}
\end{lemma}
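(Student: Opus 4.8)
The plan is to reduce the exterior-domain estimate to the known interior and boundary regularity estimates for the stationary Stokes system --- the \cz-type estimates on $\R^n$ and on the half-space $\hr$ --- by a localization argument arranged so that no global $L^2$-norm of $u$, which need not be finite, ever enters: only the Dirichlet norm $\|\nabla u\|_{L^2}$ is used. This is forced by the failure of Poincar\'e's inequality in exterior domains, and it is the reason the natural solution space is the homogeneous space $\hat{H}^1_0(\Omega)$ and only $\nabla^2u$ and $\nabla p$ can be controlled. As a first step I would establish existence and uniqueness of a weak solution with $\nabla u\in L^2(\Omega)$: on the completion of the space of smooth, compactly supported, divergence-free vector fields on $\Omega$ in the norm $\|\nabla\cdot\|_{L^2}$ --- a Hilbert space since for $n\ge3$ one has the Sobolev embedding $\hat{H}^1_0(\Omega)\hookrightarrow L^{2n/(n-2)}(\Omega)$ --- the Dirichlet form is coercive and $w\mapsto-\int_\Omega g\cdot w$ is bounded because $g\in L^2(\Omega)\subset\hat{H}^{-1}(\Omega)$ by the same embedding, so Lax--Milgram applies. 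The pressure $p\in\hat{H}^1(\Omega)$ is then recovered from de Rham's lemma together with the exterior-domain \bog\ operator, and uniqueness under $\nabla u\in L^2$ follows by testing the homogeneous problem against $u$ itself.

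For the regularity estimate, fix $R>0$ with $\R^n\setminus B_R\subset\Omega$ and a finite partition of unity subordinate to small boundary charts near $\partial\Omega$, a bounded interior region, and the exterior region $\{|x|>2R\}$. Differentiating the equation and using the classical local Stokes estimates gives $\nabla^2u,\nabla p\in H^m_{\loc}$ in the interior. Near $\partial\Omega$ I would flatten the boundary by $C^{m+2}$ diffeomorphisms, reducing locally to a Stokes system on $\hr$ whose coefficients are $C^{m+1}$-close to constant on a small chart, and apply the half-space estimate together with a perturbation/absorption argument. Near infinity, multiplying $u$ by the exterior cut-off and invoking the whole-space estimate bounds the cut-off field in $H^{m+2}(\R^n)$ and $\nabla p$ in $H^m$. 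In each localization, multiplication by a cut-off $\psi$ destroys $\dv u=0$; one restores it by subtracting $\mathcal{B}(\nabla\psi\cdot u)$, where $\mathcal{B}$ denotes the \bog\ operator on the relevant bounded piece, using its mapping property $H^k\to H^{k+1}$, and the commutator terms so produced are all supported in a fixed bounded region. Summing the finitely many local estimates, and bootstrapping in $m$ starting from the weak solution, yields
\[
\|\nabla^2u\|_{H^m(\Omega)}+\|\nabla p\|_{H^m(\Omega)}\le C\big(\|g\|_{H^m}+\|u\|_{H^{m+1}(\Omega\cap B_{3R})}\big).
\]

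It remains to replace the local term $\|u\|_{H^{m+1}(\Omega\cap B_{3R})}$ by $\|\nabla u\|_{L^2(\Omega)}$, and this is where the exterior geometry genuinely matters. On the \emph{bounded} set $\Omega\cap B_{3R}$ a standard argument --- H\"older's inequality and the Sobolev embedding above for the genuine lower-order part of the norm, and an interpolation (Ehrling-type) inequality for the top-order part when $m=1$ --- gives $\|u\|_{H^{m+1}(\Omega\cap B_{3R})}\le\eps\|\nabla^2u\|_{H^m}+C_\eps\|\nabla u\|_{L^2(\Omega)}$; note that only $\nabla u$, and no zeroth-order norm of $u$, survives on the right, which is legitimate because an $\hat{H}^1$-field vanishing at infinity is determined by its gradient. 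Absorbing $\eps\|\nabla^2u\|_{H^m}$ into the left-hand side completes the proof. I expect the main obstacle to be precisely this bookkeeping near infinity --- arranging every cut-off and every \bog\ correction so that the residual lower-order contributions stay confined to a fixed bounded set and can in the end be expressed through $\|\nabla u\|_{L^2}$ alone --- rather than any single delicate estimate, since each of the local Stokes estimates invoked above is classical.
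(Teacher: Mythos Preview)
The paper does not give its own proof of this lemma: it simply quotes the result from Galdi's monograph (Theorem~V.4.7 in \cite{Galdi}). Your outline is essentially the classical route taken there --- weak solution in the homogeneous space via Lax--Milgram, localization by cut-offs with \bog\ corrections to restore the divergence constraint, interior/half-space/whole-space Stokes estimates on the pieces, and an absorption argument at the end using that the lower-order remainders live on a fixed bounded set where $\|u\|_{L^2}$ is controlled by $\|\nabla u\|_{L^2(\Omega)}$ through the embedding $\hat{H}^1_0(\Omega)\hookrightarrow L^{2n/(n-2)}(\Omega)$ and H\"older. So there is nothing to compare: your sketch is a faithful summary of the standard proof the paper is citing.
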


\vn
The following variant of Lemma \ref{sS} will be important in the sequel.
We first introduce the spaces
$$
G_2(\Omega):=\{u\in L^2(\Omega): u=\nabla\pi \mbox{ for  some } \pi\in H^1_{loc}(\Omega)\},
$$
and
$$
L^2_\sigma(\Omega):=\{u\in C_c^\infty(\Omega): \mbox{div }u=0
\mbox{ in } \Omega\}^{\overline{\mbox{\,\,\, }}|_{\|\cdot\|_{L^2}}}
$$
Then $L^2(\Omega)$ can be decomposed into
$$
L^2(\Omega)=L^2_\sigma(\Omega)\oplus G_2(\Omega),
$$
and there exists a unique projection $\hp:\ L^2(\Omega)\rightarrow
L^2_\sigma(\Omega)$ with $G_2(\Omega)$ as its null space. $\hp$ is
called the Helmholtz projection. It is well known that
$$
L^2_\sigma(\Omega)=\{u\in L^2(\Omega): \dv u=0, \nu\cdot u|_{\partial\Omega}=0\},
$$
where $\nu$ is the exterior normal to $\partial\Omega$.

\begin{corollary}\label{psS}
Assume that the assumptions of Lemma \ref{sS} hold. Then there exists a constant $C>0$ such
that the solution $(u,p)$ of equation \eqref{eq1} satisfies
\begin{eqnarray}\label{psS_L^2}
\|\nabla^2 u\|_{H^m}\leq C(\|\hp g\|_{H^m}+\|\nabla u\|_{L^2}).
\end{eqnarray}
\end{corollary}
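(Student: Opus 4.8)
The strategy is to deduce \eqref{psS_L^2} from the a priori estimate \eqref{sS_L^2} of Lemma~\ref{sS} by replacing the data $g$ by its solenoidal component $\hp g$. There is nothing to prove unless $\hp g\in H^m(\Omega)$ (otherwise the right-hand side of \eqref{psS_L^2} is infinite, while the left-hand side is finite by Lemma~\ref{sS}), so we may assume $\hp g\in H^m(\Omega)$; for $m=0$ this is automatic since $\hp$ is the orthogonal projection of $L^2(\Omega)$ onto $L^2_\sigma(\Omega)$, and for $m=1$ it follows from the boundedness of $\hp$ on $H^1(\Omega)$, which is classical for exterior domains with boundary of class $C^3$.

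Next I would apply $\hp$ to the first equation of \eqref{eq1}. By Lemma~\ref{sS} the solution $(u,p)$ satisfies $\nabla^2u\in H^m(\Omega)$ and $\nabla p\in H^m(\Omega)$; in particular $-\Delta u\in L^2(\Omega)$, and since $p\in H^1_{\mathrm{loc}}(\Omega)$ with $\nabla p\in L^2(\Omega)$ we have $\nabla p\in G_2(\Omega)$ and hence $\hp\nabla p=0$. Therefore $\hp(-\Delta u)=\hp g$, so that $(\id-\hp)(-\Delta u)=-\Delta u-\hp g=\nabla\psi$ for some $\psi\in H^1_{\mathrm{loc}}(\Omega)$ with $\nabla\psi\in L^2(\Omega)$. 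Setting $\tilde p:=-\psi$, the pair $(u,\tilde p)$ solves
\[
-\Delta u+\nabla\tilde p=\hp g\ \text{ in }\Omega,\qquad \dv u=0\ \text{ in }\Omega,\qquad u=0\ \text{ on }\partial\Omega .
\]
I would then check that $(u,\tilde p)$ belongs to the solution class of Lemma~\ref{sS}, namely $\hat H^{m+2}(\Omega)\times\hat H^{m+1}(\Omega)$ with $\nabla u\in L^2(\Omega)$: the regularity and integrability of $u$ are inherited from Lemma~\ref{sS}, and for $\tilde p$ one reads off from the equation that $\nabla\tilde p=\Delta u+\hp g\in L^2(\Omega)$ and, when $m=1$, also $\nabla^2\tilde p=\nabla\Delta u+\nabla\hp g\in L^2(\Omega)$, using $\nabla^3 u\in L^2(\Omega)$ and $\nabla\hp g\in L^2(\Omega)$. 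Since $\nabla u\in L^2(\Omega)$, the uniqueness clause of Lemma~\ref{sS} identifies $(u,\tilde p)$ with the solution produced by that lemma for the data $\hp g\in H^m(\Omega)$, whence \eqref{sS_L^2} gives
\[
\|\nabla^2u\|_{H^m}+\|\nabla\tilde p\|_{H^m}\le C\big(\|\hp g\|_{H^m}+\|\nabla u\|_{L^2}\big),
\]
and discarding the pressure term on the left yields \eqref{psS_L^2}.

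The step I expect to require the most care is the reduction itself: one must know that $\hp$ is bounded on $H^1(\Omega)$ over the exterior domain (so that $\hp g\in H^1(\Omega)$, and so that $\tilde p$ has the $L^2$-integrable second derivatives required for $\tilde p\in\hat H^2(\Omega)$ in the case $m=1$), and one must normalize the pressure correction $\tilde p$ so that the uniqueness clause of Lemma~\ref{sS} applies. Both are standard facts about Helmholtz decompositions in exterior domains with smooth boundary; everything else is a direct substitution into Lemma~\ref{sS}.
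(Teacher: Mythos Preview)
Your proposal is correct and follows essentially the same approach as the paper: absorb the gradient part of the data into the pressure so that the Stokes system has right-hand side $\hp g$, then invoke the estimate \eqref{sS_L^2}. The paper phrases this as decomposing $g=\hp g+\nabla\varphi$ and setting $\tilde p=p-\varphi$, whereas you apply $\hp$ to the equation and extract $\nabla\psi=(\id-\hp)(-\Delta u)$; these are the same modification (indeed $\psi=\varphi-p$ up to a constant), and your version is just more explicit about verifying that $(u,\tilde p)$ lies in the solution class and invoking uniqueness.
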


\begin{proof}
Applying the Helmholtz decomposition to $g$ and noting that $\hp$ acts as a bounded operator
on $H^1(\Omega)$ yields $g=\hp g + \nabla \varphi$ for some $\nabla\varphi\in H^m(\Omega)$.
Hence, the first line of the above Stokes equation may be rewritten as
$-\Delta u+\nabla\tilde{p}=\hp g$, where $\tilde{p}=p-\varphi$. Thus, estimate
(\ref{psS_L^2}) follows immediately from eestimate (\ref{sS_L^2}).
\end{proof}

\begin{remark}\label{AsS}{\rm
Rewriting the above Stokes equation \eqref{eq1} as  $Au=\hp g$, we may replace the term  $\hp g$
in \eqref{psS_L^2} by $Au$.
}\end{remark}

We next recall a well known estimates for the $H^1$-norm of a function  defined on a bounded
domain $\Omega \subset \R^3$.

\begin{lemma}[\cite{Girault}]\label{lemmab}
Let $\Omega\subset \R^3 $ be a bounded domain with  boundary $\partial\Omega$ of class $C^2$.
Then there exists a constant $C>0$ such that
\begin{equation*}\label{binq}
\|u\|_{H^m(\Omega)}\leq C \left(\|u\|_{L^2(\Omega)} + \|\dv u\|_{L^2(\Omega)}+
\|\curl u\|_{L^2(\Omega)}+\|u\cdot\nu\|_{H^{\frac12}(\partial\Omega)}\right),
\quad u\in H^1(\Omega)^3.
\end{equation*}
\end{lemma}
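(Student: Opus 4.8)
The plan is to reduce the estimate to the classical second‑order elliptic regularity theory for the system $\dv u = f$, $\curl u = g$ by introducing an auxiliary vector potential and a scalar potential, which is the standard route to such ``div‑curl'' inequalities on bounded domains. First I would use the Helmholtz–Weyl decomposition available on a bounded $C^2$ domain to write $u = \nabla\varphi + \curl\psi + h$, where $\varphi$ solves a Neumann problem with data $\dv u$ and boundary data $u\cdot\nu$, where $\psi$ is chosen divergence free solving $\curl\curl\psi = \curl u$ with a suitable boundary condition, and where $h$ lies in the (finite‑dimensional, hence harmless) space of harmonic fields. For $\varphi$ one has the $H^2$ estimate $\|\nabla\varphi\|_{H^1}\le C(\|\dv u\|_{L^2}+\|u\cdot\nu\|_{H^{1/2}(\partial\Omega)})$ from the regularity theory for the Neumann Laplacian, and for $\psi$ one has $\|\curl\psi\|_{H^1}\le C\|\curl u\|_{L^2}$, the constant depending only on $\Omega$; the harmonic part $h$ is controlled by $\|u\|_{L^2}$ alone since the space of harmonic fields is finite dimensional and all norms on it are equivalent.

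Alternatively, and perhaps more cleanly, I would argue directly: fix a bounded smooth domain $\Omega'\supset\overline\Omega$ and solve, in $\Omega'$, the elliptic system for a potential $w$ with $-\Delta w = \curl g$ after extending $g=\curl u$ suitably; but on a bounded domain the cleanest statement is simply to invoke the a priori inequality for the first‑order system $(\dv,\curl)$ with the normal trace prescribed, which is classical (this is exactly the content cited from \cite{Girault}). Since the excerpt explicitly attributes the lemma to \cite{Girault}, the proof reduces to quoting that reference; so the ``proof'' I would write is to recall the decomposition argument above in a few lines and point to the precise theorem in \cite{Girault}, noting that the $C^2$ regularity of $\partial\Omega$ is what makes the Neumann problem for $\varphi$ and the vector‑potential problem for $\psi$ enjoy full $H^2$ regularity, and that boundedness of $\Omega$ is used both for the compact embedding $H^1\hookrightarrow L^2$ (handling the harmonic fields) and for the solvability of the potential problems.

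The one technical point that needs care — and the step I would flag as the main obstacle — is the treatment of the vector potential $\psi$ and its boundary condition: one must choose the gauge and the boundary condition on $\psi$ so that $\curl\psi$ reproduces the solenoidal, zero‑normal‑trace part of $u$ and so that the elliptic estimate $\|\psi\|_{H^2}\le C\|\curl u\|_{L^2}$ holds with a constant independent of $u$. This is where the smoothness of $\partial\Omega$ enters essentially, and where one needs the fact that on a bounded domain the operator $\psi\mapsto(\curl\curl\psi,\dv\psi,\text{boundary conditions})$ is elliptic in the sense of Agmon–Douglis–Nirenberg. Everything else — assembling the three pieces, absorbing the harmonic part into $\|u\|_{L^2}$, and summing the estimates — is routine. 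Since the statement is quoted verbatim from \cite{Girault}, in the paper itself I would simply cite that source and move on.
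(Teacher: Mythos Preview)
Your proposal is correct and matches the paper's treatment: the paper gives no proof of this lemma at all, simply citing \cite{Girault} and moving on, exactly as you anticipate in your final sentence. The Helmholtz--Weyl decomposition sketch you outline is a sound route to the result (and is essentially what one finds in Girault--Raviart), but it goes beyond what the paper itself supplies.
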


The following variant of Lemma \ref{lemmab} concerns exterior
domains. More precisely, the following proposition holds true.

\begin{proposition}\label{propunb}
Let $\Omega\subset \R^3$ be an exterior domain with  boundary
$\partial\Omega$ of class $C^2$. Then there exists a constant $C>0$
such that
\begin{equation*}\label{unbinq}
\|u\|_{H^1(\Omega)}\leq C\left(\|u\|_{L^2(\Omega)}+ \|\dv u\|_{L^2(\Omega)}+
\|\curl u\|_{L^2(\Omega)}+
\|u\cdot\nu\|_{H^{\frac12}(\partial\Omega)}\right),\quad u\in H^1(\Omega)^3.
\end{equation*}
\end{proposition}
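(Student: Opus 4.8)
The plan is to localise: the bounded-domain estimate of Lemma~\ref{lemmab} will take care of a bounded neighbourhood of $\partial\Omega$, while a counterpart identity on all of $\R^3$ handles the far field, where neither Lemma~\ref{lemmab} nor Poincar\'e's inequality is available. Concretely, I would fix $R>0$ so large that $\R^3\setminus\Omega\subset B_R$, choose a cut-off $\varphi\in C_c^\infty(B_{2R})$ with $\varphi\equiv1$ on $\overline{B_R}$ and $0\le\varphi\le1$, and for $u\in H^1(\Omega)^3$ split $u=\varphi u+(1-\varphi)u$, estimating the two pieces separately.

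For the first piece, $\varphi u$ is supported in the bounded domain $\Omega_{2R}:=\Omega\cap B_{2R}$, whose boundary is the disjoint union of the $C^2$-surfaces $\partial\Omega$ and $\partial B_{2R}$ (disjoint since $\partial\Omega\subset B_R$), so Lemma~\ref{lemmab} applies on $\Omega_{2R}$. Combined with the product rules $\dv(\varphi u)=\varphi\,\dv u+\nabla\varphi\cdot u$ and $\curl(\varphi u)=\varphi\,\curl u+\nabla\varphi\times u$ and the boundedness of $\nabla\varphi$, this gives
$$
\|\varphi u\|_{H^1(\Omega)}\le C\big(\|u\|_{L^2(\Omega)}+\|\dv u\|_{L^2(\Omega)}+\|\curl u\|_{L^2(\Omega)}+\|(\varphi u)\cdot\nu\|_{H^{\frac12}(\partial\Omega_{2R})}\big).
$$
Since $\varphi\equiv1$ on $\partial\Omega$ while $\varphi$ vanishes near $\partial B_{2R}$, the trace term reduces to $\|u\cdot\nu\|_{H^{\frac12}(\partial\Omega)}$, exactly one of the terms in the claimed bound.

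For the second piece, $1-\varphi$ vanishes on $B_R\supset\R^3\setminus\Omega$, so the extension of $(1-\varphi)u$ by zero across the obstacle is a field $v\in H^1(\R^3)^3$ with $\|v\|_{L^2(\R^3)}\le\|u\|_{L^2(\Omega)}$ and $\|(1-\varphi)u\|_{H^1(\Omega)}=\|v\|_{H^1(\R^3)}$. On the whole space the identity $\|\nabla v\|_{L^2(\R^3)}^2=\|\dv v\|_{L^2(\R^3)}^2+\|\curl v\|_{L^2(\R^3)}^2$ holds --- immediately on the Fourier side from $|\xi|^2|w|^2=|\xi\cdot w|^2+|\xi\times w|^2$, or by testing $-\Delta v=\curl\curl v-\nabla\dv v$ against $v$ after approximating by $C_c^\infty$-fields --- and the same product rules bound $\dv v$ and $\curl v$ by $\|u\|_{L^2(\Omega)}+\|\dv u\|_{L^2(\Omega)}+\|\curl u\|_{L^2(\Omega)}$. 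Adding the two estimates proves the proposition. The point that requires care is exactly this decomposition: the near-field part must genuinely live on a bounded domain and the cut-off must be chosen so as not to corrupt the $\partial\Omega$-trace, while the far-field part, for which the bounded-domain lemma and Poincar\'e fail, must be handled by the \emph{global} $\R^3$ identity (which controls the full $L^2$-norm of $\nabla v$, not merely modulo constants) --- this is what lets the estimate pass from bounded to exterior domains.
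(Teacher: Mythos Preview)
Your proof is correct and follows essentially the same route as the paper: localise via a cut-off supported in a ball containing the obstacle, apply Lemma~\ref{lemmab} to the near-field piece on the bounded domain $\Omega\cap B_{2R}$, and handle the far-field piece by extending by zero to $\R^3$ and invoking the whole-space estimate. The only cosmetic difference is that you obtain the $\R^3$ bound directly from the Fourier identity $|\xi|^2|w|^2=|\xi\cdot w|^2+|\xi\times w|^2$ (yielding the exact equality $\|\nabla v\|_{L^2}^2=\|\dv v\|_{L^2}^2+\|\curl v\|_{L^2}^2$), whereas the paper writes $\partial_k u^j$ in terms of Riesz transforms of $\dv u$ and $\curl u$; both arguments yield the same estimate.
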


\begin{proof}
We establish first a similar inequality for functions defined on  $\R^3$. Assuming,
for the time being,  that $u$ is smooth, we obtain for the $j$-th component $u^j$ of $u$
$$
\Delta u^j=\sum_{1\leq i\leq3}\partial_i(\partial_iu^j-\partial_ju^i)+\partial_j\dv u =
\sum_{1\leq i\leq3}\partial_i(\curl u)^{ij} + \partial_j\dv u.
$$
Hence,
$$
 u^j=-\sum_{1\leq i\leq3}\partial_i(-\Delta)^{-1}(\curl u)^{ij}-\partial_j(-\Delta)^{-1}\dv u,
$$
and
\begin{eqnarray*}
  \partial_ku^j&=&-\sum_{1\leq i\leq3}\partial_k\partial_i(-\Delta)^{-1}(\curl u)^{ij}
-\partial_j\partial_k(-\Delta)^{-1}\dv u\\
 &=&-\sum_{1\leq i\leq3}R_iR_k(\curl u)^{ij}-R_jR_k\dv u,
 \end{eqnarray*}
 where $R_j:=\partial_j(-\Delta)^{-\frac{1}{2}}$ denotes the $j$-th Riesz transforms.
Classical results on the boundedness of the Riesz transforms imply
 \begin{equation}\label{whole}
 \|\nabla u\|_{L^2}\leq C\left(\|\dv u\|_{L^2}+\|\curl u\|_{L^2}\right)
 \end{equation}
for some $C>0$. By density, this estimate transfers to functions $u$ belonging to
$H^1(\R^3)$.

Next, our aim is to combine the above  estimate with the
corresponding one in bounded domains in order to obtain the
assertion for exterior domains. To this end, let $R>0$ such that
$\Omega^c\subset B_R(0):=\{x\in\mathbb{R}^3:|x|<R\}$ and set
$$
D:=\Omega\cap B_{R+3}(0), \ \ \Gamma_1:=\partial\Omega\ \ \mathrm{and}\ \
\Gamma_2:=\partial D\backslash\Gamma_1.
$$
Next, we choose a cut-off function $\phi\in C^{\infty}_c(B_{R+3}(0))$ such that
$0\leq\phi\leq1$ and
$$
 \phi(x)=
 \begin{cases}
 1,\ \ |x|\leq R+1,\\
 0,\ \ |x|\geq R+2.
 \end{cases}
 $$
and decompose $u$ as
$$
u=\phi u+(1-\phi)u=:u_1+u_2.
$$
Clearly, $u_1\in H^1(D)$ and $u_1\equiv0$ near $\Gamma_2$. Moreover, $u_2\in H^1_0(\Omega)$ and
after zero extension (still denoted by $u_2$), we can regard $u_2$ as an element in
$H^1(\mathbb{R}^3)$. Now it follows from (\ref{binq}) and (\ref{whole}) that
\begin{eqnarray*}
\|\nabla u\|_{L^2(\Omega)}&\leq&\|\nabla u_1\|_{L^2(D)}+\|\nabla u_2\|_{L^2(\mathbb{R}^3)}\\
&\leq&C[\|\phi u\|_{L^2(D)}+\|\dv(\phi u)\|_{L^2(D)}+\|\curl(\phi u)\|_{L^2(D)}+
\|(\phi u)\cdot\nu\|_{H^{\frac12}(\Gamma_1)}\\
&& +\|(\phi u)\cdot\nu\|_{H^{\frac12}(\Gamma_2)}+ \|\dv((1-\phi) u)\|_{L^2(\mathbb{R}^3)}
+\|\curl((1-\phi) u)\|_{L^2(\mathbb{R}^3)}]\\
&\leq& C[\|u\cdot\nu\|_{H^{\frac12}(\Gamma_1)}+ \|\phi u\|_{L^2(D)}+\|\nabla\phi\cdot
u\|_{L^2(D)}+\|\nabla\phi\otimes u-u\otimes\nabla \phi\|_{L^2(D)}\\
&&+ \|\phi \dv u\|_{L^2(D)}+\|(1-\phi)\dv u\|_{L^2(\mathbb{R}^3)}+\|\phi\curl u\|_{L^2(D)} +
\|(1-\phi)\curl u\|_{L^2(\mathbb{R}^3)}]\\
&\leq& C[\|u\cdot\nu\|_{H^{\frac12}(\Gamma_1)}+\|u\|_{L^2(\Omega)}+\| \dv u\|_{L^2(\Omega)} +
\|\curl u\|_{L^2(\Omega)}],
 \end{eqnarray*}
and the estimate (\ref{unbinq}) follows immediately.
\end{proof}

In order to construct a local solution to (\ref{OB}), we  study
first two linearized equations; the first one for the velocity $u$
and the second one for the tangential part of the stress tensor
$\tau$, respectively.

First, given $T>0$, we recall some results on the Stokes equation
\begin{eqnarray}\label{S}
\begin{cases}
\begin{array}{rll}
\partial_tu-\Delta u+\nabla p&=f& \mathrm{in} \ \ \Omega\times(0, T),\\
\dv u&=0&\mathrm{in} \ \ \Omega\times(0, T),\\
u&=0& \mathrm{on} \ \ \partial\Omega\times(0, T),\\
u(0)&=u_0& \mathrm{in} \ \ \Omega,
\end{array}
\end{cases}
\end{eqnarray}
where $f$ is a given external force.

\begin{proposition}[\cite{Talhouk}]\label{lem2.1}
Let $\Omega\subset\mathbb{R}^3$ be an exterior  domain with boundary
$\partial\Omega$ of class $C^3$. Assume that $ f\in L^2(0,T;H^1(\Omega)),
f'\in L^2(0,T;H^{-1}(\Omega))$ and $u_0\in H^2(\Omega)\cap
V(\Omega)$. Then there exists a unique solution $(u,p)$ of equation (\ref{S})
satisfying
\begin{eqnarray*}
u&\in& L^2(0,T;H^3(\Omega))\cap C([0,T];H^2(\Omega)\cap V(\Omega)),
u'\in L^2(0,T;V(\Omega))\cap C([0,T];L^2_\sigma(\Omega)),\\
p&\in& L^2(0,T;H^2_{\textrm{loc}}(\Omega)).
\end{eqnarray*}
Moreover, there exists a constant $C>0$ such that
\begin{eqnarray*}
\|u\|^2_{L^2(H^3)\cap L^\infty(H^2\cap V)}+\|u'\|^2_{L^2(V)\cap
L^\infty(L^2_\sigma)}+\|\nabla p\|_{L^2(H^1)}^2 &\leq& C
[\|u_0\|_{H^2}^2+\|f(0)\|_{L^2}+\|f\|_{L^2(H^1)}  \\
&& + \|f'\|_{L^2(H^{-1})}].
\end{eqnarray*}
\end{proposition}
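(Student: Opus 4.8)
The plan is to recast \eqref{S} as the abstract Cauchy problem for the Stokes operator $A=-\hp\Delta$ on $L^2_\sigma(\Omega)$ and to run a three–level energy argument, each level promoted to spatial regularity by the elliptic estimates of Lemma \ref{sS} and Corollary \ref{psS}. Applying $\hp$ to the first line of \eqref{S} and using that $\partial_tu(t)\in L^2_\sigma(\Omega)$ turns the system into $u'+Au=\hp f$, $u(0)=u_0$, with the pressure recovered afterwards from $\nabla p=(\id-\hp)(f+\Delta u)$. By Remark \ref{AsS} (equivalently Corollary \ref{psS}), $D(A)=H^2(\Omega)\cap V$ with equivalent norms, so $u_0\in D(A)$ and $Au_0\in L^2_\sigma(\Omega)$. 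Since $f\in L^2(0,T;H^1(\Omega))$ and $f'\in L^2(0,T;H^{-1}(\Omega))$, the interpolation embedding $L^2(0,T;H^1)\cap H^1(0,T;H^{-1})\hookrightarrow C([0,T];L^2)$ shows $f(0)$ is well defined in $L^2(\Omega)$; hence $u'(0):=\hp f(0)-Au_0\in L^2_\sigma(\Omega)$ with $\|u'(0)\|_{L^2}\le C(\|u_0\|_{H^2}+\|f(0)\|_{L^2})$.

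For the a priori bounds I would first pair $u'+Au=\hp f$ with $u$ in $L^2_\sigma(\Omega)$; since Poincar\'e's inequality is unavailable, the term $\langle f,u\rangle$ is estimated by $\|f\|_{L^2}\|u\|_{L^2}$ and the resulting differential inequality is closed by Gronwall on $[0,T]$, giving $u\in L^\infty(0,T;L^2_\sigma)\cap L^2(0,T;V)$. Pairing instead with $Au$ and using $\langle u',Au\rangle=\tfrac12\tfrac{d}{dt}\|\nabla u\|_{L^2}^2$ yields $\tfrac{d}{dt}\|\nabla u\|_{L^2}^2+\|Au\|_{L^2}^2\le\|f\|_{L^2}^2$, whence $u\in L^\infty(0,T;V)$ and $Au\in L^2(0,T;L^2)$; Corollary \ref{psS} then gives $u\in L^2(0,T;H^2)$. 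Differentiating the equation, $v:=u'$ solves $v'+Av=\hp f'$ with $v(0)=u'(0)\in L^2_\sigma(\Omega)$, and the first (basic) estimate applied to $v$ delivers $u'\in L^\infty(0,T;L^2_\sigma)\cap L^2(0,T;V)$ with the bound governed by $\|u'(0)\|_{L^2}^2+\|f'\|_{L^2(H^{-1})}^2$. Re-inserting this into $Au=\hp f-u'$ shows $Au\in L^2(0,T;H^1)\cap L^\infty(0,T;L^2)$; now applying Lemma \ref{sS} with $m=1$ to the stationary Stokes system with right–hand side $\hp f-u'$ (i.e.\ the $H^1$–version of Corollary \ref{psS}) produces $u\in L^2(0,T;H^3)$, $\nabla p\in L^2(0,T;H^1)$ and, pointwise in time, $u\in L^\infty(0,T;H^2)$. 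Continuity in time — $u\in C([0,T];H^2\cap V)$ and $u'\in C([0,T];L^2_\sigma)$ — follows by interpolation from $u\in L^2(H^3)$, $u'\in L^2(V)$ and from $u'\in L^2(V)$, $u''=\hp f'-Au'\in L^2(H^{-1})$, respectively. Collecting all the inequalities gives the stated energy estimate.

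Existence I would obtain by the Galerkin method: project onto finite–dimensional subspaces spanned by eigenfunctions of $A$, solve the ODE systems, and note that every estimate above is already valid for the approximations uniformly in the discretization parameter; passing to the limit yields a solution in the asserted class. (Alternatively one first produces a weak solution in $L^2(0,T;V)\cap C([0,T];L^2_\sigma)$ via Lions' theorem for the Gelfand triple $V\hookrightarrow L^2_\sigma(\Omega)\hookrightarrow V'$ with the form $a(u,v)=\langle\nabla u,\nabla v\rangle$, and then bootstraps using the same estimates.) Uniqueness is immediate: the difference $w$ of two solutions solves $w'+Aw=0$, $w(0)=0$, so $w\equiv0$ by the basic energy estimate, and then $\nabla(p_1-p_2)=(\id-\hp)\Delta w=0$.

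The only genuinely delicate point, as stressed in the introduction, is the failure of Poincar\'e's inequality on the exterior domain: this is why the scheme above never absorbs $\|u\|_{L^2}$ into $\|\nabla u\|_{L^2}$ but instead controls it by a ($T$–dependent) Gronwall argument, and why it is essential that the elliptic bounds \eqref{sS_L^2}–\eqref{psS_L^2} carry the extra term $\|\nabla u\|_{L^2}$ on their right–hand sides — exactly the quantity propagated by the energy estimates. The secondary subtlety is the justification that $u'(0)=\hp f(0)-Au_0$ is a well-defined element of $L^2_\sigma(\Omega)$, which rests on the time–trace embedding for $f$ used above; everything else is routine parabolic bootstrapping.
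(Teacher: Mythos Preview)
The paper does not actually prove this proposition: it is quoted from \cite{Talhouk}, and the authors remark explicitly that even there the assertion is only stated, not proved. So there is nothing to compare your argument against, and your sketch in fact supplies more than the paper does.

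On its own merits your outline is essentially correct and follows the standard parabolic bootstrapping route (basic energy, then pairing with $Au$, then differentiating in time and repeating, with spatial regularity read off from the stationary Stokes estimates of Lemma~\ref{sS}/Corollary~\ref{psS}). One point deserves a warning: your proposed Galerkin scheme ``project onto finite–dimensional subspaces spanned by eigenfunctions of $A$'' is not available in an exterior domain, because the Stokes operator there does not have compact resolvent and hence no orthonormal basis of eigenfunctions. Your parenthetical alternative --- Lions' variational theory for the Gelfand triple $V\hookrightarrow L^2_\sigma(\Omega)\hookrightarrow V'$, followed by the same bootstrap --- is the right way to go and avoids this obstruction; you should promote it from a remark to the main construction. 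With that adjustment the argument goes through, yielding a $T$-dependent constant (unavoidable, as you note, since Poincar\'e fails and the $L^2$-level is closed by Gronwall).
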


\vn
Next, consider the  transport equation
\begin{eqnarray}\label{T}
\begin{cases}
\begin{array}{rlll}
\We(\partial_t\tau+(v\cdot\nabla)\tau)+\tau&=&2\alpha D(v) - \We g_a(\tau,\nabla v),
&\textrm{in} \ \  \Omega\times(0, T),\\
\tau(0)&=&\tau_0, &\mathrm{in} \ \ \Omega,
\end{array}
\end{cases}
\end{eqnarray}
where $v$ is a given velocity field.

\vn
\begin{proposition}[\cite{Talhouk}]\label{lem2.2}
Let $\Omega\subset\mathbb{R}^3$ be an exterior  domain with boundary
$\partial\Omega$ of class $C^3$. Assume that $v\in L^1(0,T;H^3(\Omega)\cap
V(\Omega))$ and $ \tau_0\in H^2(\Omega)$. Then there exists a unique
solution of equation (\ref{T}) and a  constant
$C>0$ such that
\begin{equation*}
\|\tau\|_{L^\infty(0,T;H^2(\Omega))}\leq (\|\tau_0\|_{H^2}+\frac{2\alpha}{C\We})
\exp(C\|v\|_{L^1(H^3)}).
\end{equation*}
If, in addition, $v\in C([0,T];H^2(\Omega)\cap V(\Omega))$, then
$\tau'\in C([0,T];H^1(\Omega))$ and
\begin{equation*}
\|\tau'\|_{L^\infty(0,T;H^1)}\leq
C(\|v\|_{L^\infty(H^2)}+\frac{1}{C\We})(\|\tau_0\|_{H^2}+\frac{2\alpha}{C\We})
\exp(C\|v\|_{L^1(H^3)}).
\end{equation*}
\end{proposition}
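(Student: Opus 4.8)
The plan is to regard \eqref{T}, for the prescribed velocity field $v$, as a linear inhomogeneous transport equation for $\tau$: the only nonlinearity, $g_a(\tau,\nabla v)$, is \emph{linear} in $\tau$, with coefficients built from $\nabla v$. Since $v\in L^1(0,T;H^3(\Omega))$ and $H^3(\Omega)\hookrightarrow W^{1,\infty}(\Omega)$ in space dimension three, and since $v\cdot\nu|_{\partial\Omega}=0$ together with $\dv v=0$ (as $v\in V(\Omega)$), the flow $X(t,\cdot)$ of $\dot X=v(t,X)$, $X(0,\cdot)=\mathrm{id}$, is a well defined, measure preserving, bi-Lipschitz homeomorphism of $\overline{\Omega}$ leaving $\partial\Omega$ invariant. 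Along characteristics \eqref{T} becomes a linear ODE $\We\,\partial_s(\tau\circ X)=M(s)(\tau\circ X)+F(s)$ with $M\in L^1_s$ (assembled from $W(v),D(v)$ evaluated at $X(s)$) and $F=2\alpha D(v)(s,X(s))$, which is uniquely solvable by Carath\'eodory theory; this yields existence, while uniqueness follows from a direct $L^2$-estimate on the difference $\delta=\tau_1-\tau_2$ of two solutions: testing $\We(\partial_t\delta+(v\cdot\nabla)\delta)+\delta=-\We\,g_a(\delta,\nabla v)$ with $\delta$ and using $\dv v=0$ to annihilate the transport term gives $\tfrac{d}{dt}\|\delta\|_{L^2}^2\leq C\|\nabla v\|_{L^\infty}\|\delta\|_{L^2}^2$, hence $\delta\equiv0$.

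For the $H^2$-bound I would differentiate the first line of \eqref{T} by $\partial^\gamma$ with $|\gamma|\leq2$, test with $\partial^\gamma\tau$, and sum over $\gamma$. The transport term drops out, since $\int_\Omega(v\cdot\nabla)\partial^\gamma\tau:\partial^\gamma\tau=\tfrac12\int_\Omega v\cdot\nabla|\partial^\gamma\tau|^2=0$ by $\dv v=0$ and $v\cdot\nu|_{\partial\Omega}=0$, leaving
$$
\tfrac{\We}{2}\tfrac{d}{dt}\|\tau\|_{H^2}^2+\|\tau\|_{H^2}^2\leq\We\sum_{|\gamma|\leq2}\bigl(\|[\partial^\gamma,v\cdot\nabla]\tau\|_{L^2}+\|\partial^\gamma g_a(\tau,\nabla v)\|_{L^2}\bigr)\|\tau\|_{H^2}+2\alpha\|D(v)\|_{H^2}\|\tau\|_{H^2}.
$$
The commutator obeys $\|[\partial^\gamma,v\cdot\nabla]\tau\|_{L^2}\leq C\|v\|_{H^3}\|\tau\|_{H^2}$ (a Moser--Kato--Ponce type estimate, valid on $\Omega$ via a bounded extension operator and the embeddings $H^1\hookrightarrow L^6$, $H^3\hookrightarrow W^{1,\infty}$); since $H^2(\Omega)$ is a Banach algebra in three dimensions, the bilinearity of $g_a$ gives $\|g_a(\tau,\nabla v)\|_{H^2}\leq C\|\tau\|_{H^2}\|v\|_{H^3}$, and $\|D(v)\|_{H^2}\leq C\|v\|_{H^3}$. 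Dividing by $\|\tau\|_{H^2}$ yields $\We\tfrac{d}{dt}\|\tau\|_{H^2}+\|\tau\|_{H^2}\leq C\We\|v\|_{H^3}\|\tau\|_{H^2}+C\alpha\|v\|_{H^3}$, so $\tfrac{d}{dt}\|\tau\|_{H^2}\leq C\|v\|_{H^3}\|\tau\|_{H^2}+\tfrac{C\alpha}{\We}\|v\|_{H^3}$; Gr\"onwall, combined with $\int_0^t\|v(s)\|_{H^3}\exp\!\bigl(C\!\int_s^t\|v\|_{H^3}\bigr)ds\leq\tfrac1C\exp(C\|v\|_{L^1(H^3)})$, delivers the claimed bound for $\|\tau\|_{L^\infty(0,T;H^2)}$.

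For the statement on $\tau'$ I would solve \eqref{T} pointwise for the time derivative, $\tau'=-(v\cdot\nabla)\tau-\tfrac1\We\tau+\tfrac{2\alpha}{\We}D(v)-g_a(\tau,\nabla v)$, and estimate each summand in $H^1(\Omega)$ via the product rule $\|fg\|_{H^1}\leq C\|f\|_{H^2}\|g\|_{H^1}$: this gives $\|(v\cdot\nabla)\tau\|_{H^1}+\|g_a(\tau,\nabla v)\|_{H^1}\leq C\|v\|_{H^2}\|\tau\|_{H^2}$, $\|D(v)\|_{H^1}\leq C\|v\|_{H^2}$ and $\|\tau\|_{H^1}\leq\|\tau\|_{H^2}$, hence $\|\tau'\|_{H^1}\leq C(\|v\|_{H^2}+\tfrac1\We)\|\tau\|_{H^2}+\tfrac{C\alpha}{\We}\|v\|_{H^2}$; inserting the $H^2$-bound on $\tau$ and absorbing the $D(v)$-term produces the asserted estimate. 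If moreover $v\in C([0,T];H^2(\Omega)\cap V(\Omega))$, each factor on the right-hand side of the formula for $\tau'$ is continuous into $H^1(\Omega)$, whence $\tau'\in C([0,T];H^1(\Omega))$; the continuity $\tau\in C([0,T];H^2(\Omega))$ follows in the usual way from the uniform $H^2$-bound together with weak continuity in time.

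The main obstacle is technical rather than conceptual: carrying out the Moser-type commutator and product estimates on the \emph{exterior} domain $\Omega$ rather than on $\R^3$, which is done by means of a bounded Sobolev extension operator $H^k(\Omega)\to H^k(\R^3)$ and the three-dimensional algebra property of $H^2$, and justifying both the characteristics construction and the formal energy identities when $v$ is only $L^1$ in time. This last point is handled by first mollifying $v$ in time, performing all computations for the resulting smooth field, and then passing to the limit using the uniform bounds established above.
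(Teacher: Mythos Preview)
The paper does not actually prove this proposition: it is stated with attribution to \cite{Talhouk}, and immediately after Propositions~\ref{lem2.1} and~\ref{lem2.2} the authors remark that ``the assertions of Propositions~\ref{lem2.1} and~\ref{lem2.2} are stated e.g.\ in \cite{Talhouk} even for a more general class of domains, however, without giving a proof.'' There is therefore no proof in the paper to compare your attempt against.

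That said, your outline is the standard and correct route. The equation is linear in $\tau$ once $v$ is fixed, the characteristic/ODE picture together with the $L^2$ difference estimate disposes of existence and uniqueness, and the $H^2$ energy estimate with Kato--Ponce type commutator bounds (transferred to $\Omega$ via a Sobolev extension operator, using that $H^2(\R^3)$ is an algebra) gives exactly the Gr\"onwall inequality leading to the stated exponential bound. Reading $\tau'$ off the equation and estimating each term in $H^1$ via $\|fg\|_{H^1}\lesssim\|f\|_{H^2}\|g\|_{H^1}$ is likewise the natural way to obtain the second estimate and the continuity $\tau'\in C([0,T];H^1)$. The two technical caveats you flag---justifying the energy identities when $v$ is only $L^1$ in time (handled by mollification and passage to the limit) and carrying the product/commutator estimates from $\R^3$ to the exterior domain---are precisely the points that require care, and your indicated treatment of them is adequate. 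One minor cosmetic point: dividing by $\|\tau\|_{H^2}$ tacitly assumes it is nonzero; it is cleaner to argue directly with $\tfrac{d}{dt}\|\tau\|_{H^2}^2$ and apply Gr\"onwall to the resulting linear differential inequality.
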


The assertions of Propositions \ref{lem2.1} and \ref{lem2.2} are stated e.g. in
\cite{Talhouk} even for a more general class of domains, however, without giving a proof.

\section{Existence and Uniqueness of a Local Solution}

\noindent
In this section we prove that the system \eqref{OB} possesses a  unique, local solution provided
the intial data are smooth enough. More precisely, the following result holds true.

\begin{proposition}\label{local}
Assume that $\Omega\subset \mathbb{R}^3$ is an exterior domain with boundary $\partial\Omega$
of class $C^3$. Let  $u_0\in D(A)$ and $\tau_0\in H^2(\Omega)$. Then there exist
$T_\star>0$ and functions
\begin{eqnarray*}
u&\in& L^2(0,T_\star;H^3(\Omega))\cap C([0,T_\star];D(A)) \mbox{ with } u'\in L^2(0,T_\star,V)
\cap C([0,T_\star];L^2_\sigma(\Omega)),\\
p&\in& L^2(0,T;H^2_{loc}(\Omega)) \mbox{ with } \nabla p\in L^2(0,T_\star;H^1(\Omega)),\\
\tau&\in& C([0,T_\star];H^2(\Omega)) \mbox{ with } \tau' \in C([0,T_\star], H^1(\Omega))
\end{eqnarray*}
such that $(u,p,\tau)$ is the unique solution to equation (\ref{OB}) on $(0,T_\star)$.
\end{proposition}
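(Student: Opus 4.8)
The plan is to construct the local solution via a fixed-point argument for a suitable nonlinear map, following the strategy the authors announced in the introduction (a variant of Banach's contraction principle rather than Schauder). First I would fix $T_\star>0$ small and a radius $M>0$, and set up the closed ball
$$
\mathcal{K}_{T_\star,M}:=\Bigl\{(v,\sigma): v\in C([0,T_\star];D(A)),\ v'\in C([0,T_\star];L^2_\sigma(\Omega)),\ \sigma\in C([0,T_\star];H^2(\Omega)),\ v(0)=u_0,\ \sigma(0)=\tau_0,\ \text{norms}\le M\Bigr\},
$$
equipped with a weaker metric (say the $C([0,T_\star];V)\times C([0,T_\star];H^1)$ metric) in which it is complete. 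Given $(v,\sigma)\in\mathcal{K}_{T_\star,M}$, I would define $(u,p,\tau)=\Phi(v,\sigma)$ by solving the two decoupled linear problems: first solve the transport equation \eqref{T} with velocity field $v$ and right-hand side built from $\sigma$ and $v$ to obtain $\tau$, using Proposition \ref{lem2.2}; then solve the Stokes system \eqref{S} with external force $f=\dv\tau-Re\,(v\cdot\nabla)v$ (after appropriate rescaling by $Re$ and $1-\alpha$) to obtain $(u,p)$, using Proposition \ref{lem2.1}. Here one must check the hypotheses of those propositions, in particular that $f\in L^2(0,T_\star;H^1)$ with $f'\in L^2(0,T_\star;H^{-1})$, which uses $\tau\in C([0,T_\star];H^2)$, $\tau'\in C([0,T_\star];H^1)$ and the regularity of $v$; the convective term $(v\cdot\nabla)v$ is handled by the algebra/multiplication properties of $H^s(\Omega)$ in dimension $3$.

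Next I would show that for $T_\star$ sufficiently small, $\Phi$ maps $\mathcal{K}_{T_\star,M}$ into itself. The key point is that the estimates in Propositions \ref{lem2.1} and \ref{lem2.2} produce bounds of the form $\|\tau\|_{L^\infty(H^2)}\le(\|\tau_0\|_{H^2}+c\alpha/We)\exp(C\|v\|_{L^1(H^3)})$ and $\|u\|_{L^\infty(H^2\cap V)}^2+\dots\le C[\|u_0\|_{H^2}^2+\|f(0)\|_{L^2}+\|f\|_{L^2(H^1)}+\|f'\|_{L^2(H^{-1})}]$. Since $\|v\|_{L^1(0,T_\star;H^3)}\le T_\star^{1/2}\|v\|_{L^2(0,T_\star;H^3)}$ and the bad (superlinear) contributions to $\|f\|_{L^2(H^1)}$ carry a factor $T_\star^{1/2}$ or higher, choosing $M$ first (comparable to $\|u_0\|_{D(A)}+\|\tau_0\|_{H^2}$ times a fixed constant, plus the $\alpha/We$ term) and then $T_\star=T_\star(M)$ small closes the self-mapping estimate. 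One subtlety, caused precisely by the failure of Poincar\'e's inequality on exterior domains, is that Proposition \ref{lem2.1} controls $u$ only in $H^2\cap V$ and not in a full $D(A)$-norm with a lower-order term; but $\|u\|_{D(A)}\simeq\|u\|_{L^2}+\|Au\|_{L^2}\simeq\|u\|_{L^2}+\|\Delta u\|_{L^2}$ and the $L^2$-norm of $u$ is recovered from $u_0$ plus $\int_0^{t}\|u'\|_{L^2}$, which is again small for small $T_\star$, so this causes no real trouble at the local stage.

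Then I would prove the contraction property: for two data points $(v_1,\sigma_1),(v_2,\sigma_2)\in\mathcal{K}_{T_\star,M}$, estimate $\Phi(v_1,\sigma_1)-\Phi(v_2,\sigma_2)$ in the weaker $C(V)\times C(H^1)$ metric. The linear theory is used again, now on the difference equations: the transport equation for $\tau_1-\tau_2$ has right-hand side involving $D(v_1-v_2)$, $g_a(\sigma_1,\nabla v_1)-g_a(\sigma_2,\nabla v_2)$ and the transport defect $((v_1-v_2)\cdot\nabla)\tau_2$, all bounded by $M$ times the difference norms; the Stokes difference equation has force $\dv(\tau_1-\tau_2)-Re[(v_1\cdot\nabla)v_1-(v_2\cdot\nabla)v_2]$. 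Bilinear estimates plus the uniform bound $M$ and a gain of $T_\star^{\delta}$ for some $\delta>0$ give a Lipschitz constant $\le C(M)T_\star^{\delta}<1$ for $T_\star$ small. Banach's fixed point theorem then yields a unique fixed point $(u,\tau)$ in $\mathcal{K}_{T_\star,M}$, and with the associated pressure $p$ from the Stokes solve this $(u,p,\tau)$ solves \eqref{OB} on $(0,T_\star)$ with the stated regularity; uniqueness in the full solution class follows by running the same difference estimate with a Gronwall argument (no smallness of data needed, only local-in-time), or directly from uniqueness of the fixed point after noting any solution lands in some $\mathcal{K}_{T_\star,M}$.

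I expect the main obstacle to be the \emph{self-mapping step for $\tau$ combined with the force regularity $f'\in L^2(H^{-1})$}: one must differentiate $f=\dv\tau-Re(v\cdot\nabla)v$ in time and control $\tau'$ in $H^1$ (available from Proposition \ref{lem2.2}, but only once $v\in C([0,T_\star];H^2\cap V)$, so the two linear solves must be chained in the right order) and $\partial_t((v\cdot\nabla)v)=(v'\cdot\nabla)v+(v\cdot\nabla)v'$ in $H^{-1}$, which needs $v'\in C([0,T_\star];L^2_\sigma)$ and a careful duality/product estimate in $H^3$-$L^2$ Sobolev scales on the exterior domain. Keeping all constants independent of $T_\star$ and extracting the positive power of $T_\star$ from every nonlinear term — rather than from the linear estimates, which are essentially $T_\star$-uniform — is the delicate bookkeeping that makes the contraction work.
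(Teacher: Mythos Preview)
Your proposal is essentially correct and follows the same overall strategy as the paper: self-mapping of a linearization map on a strong-norm ball, contraction in a weaker norm, and then Lemma~\ref{fix}. Two operational differences are worth noting.

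First, the paper takes the contraction metric even weaker than you do: it works in
\[
Y(T)=\bigl\{(v,\theta):\ v,\theta\in L^\infty(0,T;L^2),\ \nabla v\in L^2(0,T;L^2)\bigr\},
\]
i.e.\ at the $L^2$ level, not $C(V)\times C(H^1)$. This makes the contraction estimate a straightforward $L^2$ energy computation on the difference system (multiply by $\bar u$ and $\bar\tau$ and integrate), avoiding the extra derivative you would have to take. Your choice would also work but requires more bookkeeping, exactly the ``delicate'' part you anticipate.

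Second, the paper's linearization is fully decoupled rather than chained: in their map $\Phi(v,\theta)=(u,\tau)$, the Stokes problem for $u$ uses the \emph{input} $\theta$ in the force $\mathbb{P}\,\mathrm{div}\,\theta$, not the freshly computed $\tau$; and the transport problem for $\tau$ uses only $v$ (with the genuine nonlinear term $g_a(\tau,\nabla v)$, as in Proposition~\ref{lem2.2}). So both components of $\Phi$ are solved independently from $(v,\theta)$, and Propositions~\ref{lem2.1} and \ref{lem2.2} apply verbatim. Your chaining (solve for $\tau$ first, then feed it into Stokes) is legitimate, but note that if you also keep $g_a(\tau,\nabla v)$ rather than $g_a(\sigma,\nabla v)$, your map no longer depends on $\sigma$ at all; and if you linearize to $g_a(\sigma,\nabla v)$, Proposition~\ref{lem2.2} does not apply as stated. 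Either way is easily fixed, but the paper's fully decoupled choice sidesteps the issue.

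One omission in your ball $\mathcal{K}_{T_\star,M}$: you need $v\in L^2(0,T_\star;H^3)$ (not just $C([0,T_\star];D(A))$) both to invoke Proposition~\ref{lem2.2} and to extract the factor $T_\star^{1/2}$ you use later; the paper's set $K(T)$ includes this via $E_1=L^2(0,T;H^3)\cap L^\infty(0,T;H^2\cap V)$.
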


Let us begin the proof of Proposition \ref{local} with the following  variant of
Banach's fixed point theorem. For a proof, we refer e.g. to \cite{Kreml}.

\begin{lemma}[\cite{Kreml}]\label{fix}
Let $X$ be a reflexive Banach space or let $X$ have a separable pre-dual. Let $K$ be a
convex, closed and bounded subset of $X$ and assume that  $X$ is embedded into a
Banach space $Y$. Let $\Phi: X\rightarrow X$ map $K$ into $K$ and
assume there exists  $q<1$ such that
$$
\|\Phi(x)-\Phi(x)\|_{Y}\leq q \|x-y\|_Y,\ \ x,y\in K.
$$
Then there exists a unique fixed point of $\Phi$ in $K$.
\end{lemma}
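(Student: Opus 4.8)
The plan is to produce the fixed point by a Picard iteration carried out with respect to the weaker norm $\|\cdot\|_Y$, and then to recover membership in $K\subset X$ from the boundedness of $K$ in $X$ together with the structural hypothesis on $X$. First I would fix any $x_0\in K$ and set $x_{n+1}:=\Phi(x_n)$; since $\Phi$ maps $K$ into $K$, every $x_n$ lies in $K$. The contraction estimate gives $\|x_{n+1}-x_n\|_Y\le q\|x_n-x_{n-1}\|_Y$, hence $\|x_{n+1}-x_n\|_Y\le q^n\|x_1-x_0\|_Y$, so $(x_n)$ is a Cauchy sequence in the Banach space $Y$; let $x_\star\in Y$ denote its limit.

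Next I would argue that $x_\star\in K$, in particular that $x_\star\in X$. Since $K$ is bounded in $X$, the sequence $(x_n)$ is bounded in $X$. When $X$ is reflexive, Eberlein--\v{S}mulian provides a subsequence with $x_{n_k}\rightharpoonup\bar x$ weakly in $X$; as $K$ is convex and norm-closed it is weakly closed by Mazur's lemma, so $\bar x\in K$. Because $X\hookrightarrow Y$ is continuous it is weak-to-weak continuous, hence $x_{n_k}\rightharpoonup\bar x$ in $Y$ as well, while at the same time $x_{n_k}\to x_\star$ strongly — hence weakly — in $Y$; uniqueness of weak limits in $Y$ then forces $\bar x=x_\star$, so $x_\star=\bar x\in K\subset X$. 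In the case where $X$ has a separable predual $Z$ I would run the same argument with the weak topology of $X$ replaced by $\sigma(X,Z)$, using Banach--Alaoglu together with the metrisability of norm-bounded sets for weak-$\ast$ convergence; the details of this variant are in \cite{Kreml}.

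It then remains to check that $x_\star$ is a fixed point and that it is unique. Applying the contraction estimate once more, $\|\Phi(x_\star)-x_{n+1}\|_Y=\|\Phi(x_\star)-\Phi(x_n)\|_Y\le q\|x_\star-x_n\|_Y\to 0$, so $\Phi(x_\star)=\lim_n x_{n+1}=x_\star$ in $Y$; since both $\Phi(x_\star)$ and $x_\star$ lie in $X$ and coincide as elements of $Y$, while $X\hookrightarrow Y$ is injective, we obtain $\Phi(x_\star)=x_\star$ in $X$. Uniqueness is immediate: if $x,\tilde x\in K$ are fixed points, then $\|x-\tilde x\|_Y=\|\Phi(x)-\Phi(\tilde x)\|_Y\le q\|x-\tilde x\|_Y$ with $q<1$, whence $x=\tilde x$ in $Y$ and therefore in $X$. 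I expect the one delicate step to be the middle paragraph — showing that the weak (respectively weak-$\ast$) subsequential limit $\bar x$ both belongs to $K$ and coincides with the strong $Y$-limit $x_\star$ — since that is exactly where the convexity and closedness of $K$ and the reflexivity or separable-predual structure of $X$ are used; the rest is the classical Banach iteration.
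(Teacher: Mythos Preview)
The paper does not actually give a proof of this lemma; it simply states it and says ``For a proof, we refer e.g.\ to \cite{Kreml}.'' So there is no in-paper argument to compare against. Your proposal is the standard and correct proof of this result: Picard iteration in the $Y$-norm, weak (respectively weak-$\ast$) compactness in $X$ to place the limit back in $K$, and identification of the two limits via the continuous embedding.

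One small caveat worth flagging explicitly in the separable-predual case: norm-closed convex sets are automatically weakly closed (Mazur), but not in general weak-$\ast$ closed, so the step ``$\bar x\in K$'' needs a word of justification there. One way is to note that in applications $K$ is typically an intersection of norm-balls and hence weak-$\ast$ closed; another is to use that the weak-$\ast$ limit along a sequence already lies in the weak sequential closure, and bounded convex norm-closed sets in a dual space with separable predual are weak-$\ast$ sequentially closed. You correctly defer this point to \cite{Kreml}, but since that is precisely the place where the hypothesis is used, it would strengthen your write-up to spell it out.
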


\vn
Our proof of Proposition \ref{local} relies on a combination of
Propositions \ref{lem2.1} and \ref{lem2.2} with Lemma \ref{fix}.
To this end, consider for  $T>0$ the following function spaces
\begin{eqnarray*}
E_1 &:=& L^2(0,T;H^3(\Omega))\cap L^\infty(0,T;H^2(\Omega)\cap V),\\
E_2 &:=& L^2(0,T;V(\Omega))\cap L^\infty(0,T;L^2_\sigma(\Omega)), \\
F_1 &:=& L^\infty(0,T;H^2(\Omega)), \\
F_2 &:=& L^\infty(0,T;H^1(\Omega))),
\end{eqnarray*}
and for $B_1,B_2>0$ define the set $K(T)$ by
\begin{eqnarray*}
K(T)&:=&\{(v,\theta) \in E_1 \times F_1,  v' \in E_2, \theta' \in F_2,
v(0)= u_0, \theta(0) = \tau_0 \mbox{ and } \\
&& \quad \|v\|^2_{E_1} + \|v'\|^2_{E_2} \leq B_1,
\|\theta\|_{F_1}\leq B_1, \|\theta'\|_{F_2}\leq B_2\}.
\end{eqnarray*}

Next, given $(v, \theta)\in K(T)$, we define the mapping
$$
\Phi(v,\theta):=(u,\tau),
$$
where $(u,\tau)$ is defined to be the unique solution of the
corresponding linearized problem of (\ref{OB})
\begin{eqnarray}\label{LOB}
\begin{cases}
\begin{array}{rlll}
\Rey\partial_t u+(1-\alpha)Au& =& -\hp\dv(v\otimes v)+\hp\dv\theta&\mathrm{in}\ \
\Omega\times(0,T),\\
\We(\partial_t\tau+(v\cdot\nabla)\tau)+\tau& =& 2\alpha D(v)-\We g_a(\tau,\nabla v)
&\mathrm{in}\ \ \Omega\times(0,T),\\
u&=&0&\mathrm{on}\ \ \partial\Omega\times(0,T),\\
u(0)&=&u_0&\mathrm{in}\ \ \Omega,\\
\tau(0)&=&\tau_0&\mathrm{in}\ \ \Omega,
\end{array}
\end{cases}
\end{eqnarray}
where $A$ denotes the Stokes operator defined as in Section 1. It
follows from Proposition \ref{lem2.1} and \ref{lem2.2} that for
appropriate choices of $B_1$ and $B_2$, there exists $T_1>0$ such
that $\Phi(K(T_1))\subset K(T_1)$.

Next, we will prove that there exists $T_\star \in (0,T_1]$
such that $\Phi$ is contractive on $Y(T_\star)$, where $Y(T)$ is
defined by
\begin{eqnarray*}
Y(T):=\left\{(v, \theta) \in  L^\infty(0,T;L^2(\Omega)) \times L^\infty(0,T;L^2(\Omega)),
\nabla v\in L^2(0,T;L^2(\Omega))\right\}.
\end{eqnarray*}
Indeed, for $(v_i,\theta_i)\in K(T_1)$ let $(u_i,\tau_i)=\Phi(v_i,\theta_i)$ for  $i=1,2$.
Moreover, we set $\bar{u}=u_1-u_2$ and $\bar{\tau}=\tau_1-\tau_2$.
Then $(\bar{u},\bar{\tau})$ satisfies the equation
\begin{eqnarray}\label{DLOB}
\qquad\quad
\begin{cases}
\begin{array}{rlll}
\Rey\partial_t\bar{u}+(1-\alpha)A\bar{u}&=&-\mathbb{P}\dv(\bar{v}\otimes v_1+v_2\otimes\bar{v} )
+\mathbb{P}\dv\bar{\theta}&\mathrm{in}\ \ \Omega\times(0,T),\\
\We\partial_t\bar{\tau}+\bar{\tau}&=&2\alpha
D(\bar{v})-\We[(\bar{v}\cdot\nabla)\tau_1+(v_2\cdot\nabla)\bar{\tau} + & \\
&& g_a(\bar{\tau},\nabla v_1)+g_a(\tau_2, \nabla \bar{v})]&\mathrm{in}\ \
\Omega\times(0,T),\\
\bar{u}&=&0&\mathrm{on}\ \ \partial\Omega\times(0,T),\\
\bar{u}(0)&=&0&\mathrm{in}\ \ \Omega,\\
\bar{\tau}(0)&=&0&\mathrm{in}\ \ \Omega.
\end{array}
\end{cases}
\end{eqnarray}

Taking the $L^2$ inner product of $(\ref{DLOB})_1$ with $\bar{u}$,
we obtain
\begin{eqnarray*}
\frac{1}{2}\frac{d}{dt}(\Rey\|\bar{u}\|_{L^2}^2)+(1-\alpha)\|\nabla\bar{u}\|_{L^2}^2&=&
\Rey(\bar{v}\otimes
v_1+v_2\otimes\bar{v}|\nabla\bar{u})-(\bar{\theta}|\nabla\bar{u})\\
&\leq&\frac{1-\alpha}{2}\|\nabla\bar{u}\|_{L^2}^2+\frac{2\Rey^2}{1-\alpha}(\|v_1\|_{L^\infty}^2
+\|v_2\|_{L^\infty}^2)
\|\bar{v}\|_{L^2}^2+\frac{1}{1-\alpha}\|\bar{\theta}\|_{L^2}^2\\
&\leq&\frac{1-\alpha}{2}\|\nabla\bar{u}\|_{L^2}^2+\frac{C\Rey^2}{1-\alpha}
(\|v_1\|_{H^2}^2+\|v_2\|_{H^2}^2)
\|\bar{v}\|_{L^2}^2+\frac{1}{1-\alpha}\|\bar{\theta}\|_{L^2}^2.
\end{eqnarray*}
Consequently,
\begin{eqnarray}\label{l2u}
\frac{d}{dt}(\Rey\|\bar{u}\|_{L^2}^2)+(1-\alpha)\|\nabla\bar{u}\|_{L^2}^2
\leq\frac{C\Rey^2}{1-\alpha}(\|v_1\|_{H^2}^2+\|v_2\|_{H^2}^2)
\|\bar{v}\|_{L^2}^2+\frac{2}{1-\alpha}\|\bar{\theta}\|_{L^2}^2.
\end{eqnarray}
Taking the $L^2$ inner product of $(\ref{DLOB})_2$ with $\bar{\tau}$, we are led to
\begin{eqnarray}\label{l2t}
\nonumber\frac{1}{2}\frac{d}{dt}(\We\|\bar{\tau}\|_{L^2}^2)+\|\bar{\tau}\|_{L^2}^2
&=&2\alpha(D(\bar{v})|\bar{\tau})
-\We((\bar{v}\cdot\nabla)\tau_1+g_a(\bar{\tau},\nabla v_1)+g_a(\tau_2, \nabla \bar{v})|
\bar{\tau})\\
\nonumber&\leq&\frac\delta4\|\nabla\bar{v}\|_{L^2}^2+\frac{4\alpha^2}{\delta}\|
\bar{\tau}\|_{L^2}^2
+\, C \We(\|\bar{v}\|_{L^6}\|\nabla\tau_1\|_{L^3}+\|\nabla\bar{v}\|_{L^2}\|\tau_2\|_{L^\infty})
\|\bar{\tau}\|_{L^2}\\
\nonumber&& + C \We\|\nabla v_1\|_{L^\infty}\|\bar{\tau}\|_{L^2}^2\\
\nonumber&\leq& \frac\delta4\|\nabla\bar{v}\|_{L^2}^2+\frac{4\alpha^2}{\delta}\|
\bar{\tau}\|_{L^2}^2 +
C \We\|\nabla\bar{v}\|_{L^2}(\|\tau_1\|_{H^2}+\|\tau_2\|_{H^2})
\|\bar{\tau}\|_{L^2}\\
\nonumber&&+C\We\|\nabla v_1\|_{H^2}\|\bar{\tau}\|_{L^2}^2\\
&\leq&\frac\delta2\|\nabla\bar{v}\|_{L^2}^2+
\frac{C_1}{2}\big(\frac{1}{\delta}(1+\|\tau_1\|_{H^2}^2+\|\tau_2\|_{H^2}^2)+\|\nabla
v_1\|_{H^2}\big)\We\|\bar{\tau}\|_{L^2}^2,
\end{eqnarray}
for some $C_1>0$ and all $\delta>0$. It follows from (\ref{l2u}) and (\ref{l2t}) that,
for all $t\in [0,T]$,
\begin{eqnarray*}
\Rey\|\bar{u}\|_{L^2}^2 &+&\We\|\bar{\tau}\|_{L^2}^2+\int_0^t\left((1-\alpha)\|
\nabla\bar{u}\|_{L^2}^2
+2\|\bar{\tau}\|_{L^2}^2\right)ds\\
&\leq&TC_2(1+\|v_1\|_{L^\infty(H^2)}^2+\|v_2\|_{L^\infty(H^2)}^2)
(\|\bar{v}\|_{L^\infty(L^2)}^2+\|\bar{\theta}\|_{L^\infty(L^2)}^2)+
\delta\int_0^t\|\nabla\bar{v}\|_{L^2}^2ds\\
&&+C_1\int_0^t (\frac{1}{\delta}(1+\|\tau_1\|_{H^2}^2+\|\tau_2\|_{H^2}^2)+\|\nabla
v_1\|_{H^2})(\We\|\bar{\tau}\|_{L^2}^2)ds\\
&\leq&TC_2(1+2B_1)(\|\bar{v}\|_{L^\infty(L^2)}^2+\|\bar{\theta}\|_{L^\infty(L^2)}^2)+
\delta\int_0^t\|\nabla\bar{v}\|_{L^2}^2ds\\
&&+C_1\int_0^t(\frac{1}{\delta}(1+\|\tau_1\|_{H^2}^2+\|\tau_2\|_{H^2}^2)+\|\nabla
v_1\|_{H^2})(\We\|\bar{\tau}\|_{L^2}^2)ds,
\end{eqnarray*}
for some $C_2>0$. Gronwall's inequality implies then
\begin{eqnarray*}
\Rey\|\bar{u}\|_{L^\infty(L^2)}^2 &+&\We\|\bar{\tau}\|_{L^\infty(L^2)}^2 +
\int_0^T\left((1-\alpha)\|\nabla\bar{u}\|_{L^2}^2
+2\|\bar{\tau}\|_{L^2}^2\right)ds\\
&\leq&[TC_2(1+2B_1)(\|\bar{v}\|_{L^\infty(L^2)}^2+\|\bar{\theta}\|_{L^\infty(L^2)}^2)+
\delta\int_0^T\|\nabla\bar{v}\|_{L^2}^2ds]\\
&&\times [1+C_1(\frac{T}{\delta}(1+\|\tau_1\|_{L^\infty(H^2)}^2+
\|\tau_2\|_{\infty(H^2)}^2)
+\sqrt{T}\|\nabla v_1\|_{L^2(H^2)})\\
&&\times\exp\big(C_1(\frac{T}{\delta}(1+\|\tau_1\|_{L^\infty(H^2)}^2
+\|\tau_2\|_{\infty(H^2)}^2)
+\sqrt{T}\|\nabla v_1\|_{L^2(H^2)})\big)]\\
&\leq&[TC_2(1+2B_1)(\|\bar{v}\|_{L^\infty(L^2)}^2+\|\bar{\theta}\|_{L^\infty(L^2)}^2)+
\delta\int_0^T\|\nabla\bar{v}\|_{L^2}^2ds]\\
&&\times [1+C_1\big(\frac{T}{\delta}(1+2B_1^2)
+\sqrt{TB_1}\big)\exp\big(C_1(\frac{T}{\delta}(1+2B_1^2)
+\sqrt{TB_1})\big)],
\end{eqnarray*}
Setting  $\delta=\min\{\Rey,\We,1-\alpha\}(4+8C_1\exp(2C_1))^{-1}$ and
$T_\star=\min\{T_1, \frac{\delta}{1+2B_1^2}, \frac{1}{B_1},
\frac{\min\{\Rey, \We, 1-\alpha\}}{4C_2(1+2B_1)(1+2C_1\exp(2C_1))}\}$,
we see that for all $T\leq T_\star$,
\begin{eqnarray*}
\|\bar{u}\|_{L^\infty(L^2)}^2+\|\bar{\tau}\|_{L^\infty(L^2)}^2+\int_0^T
\left(\|\nabla\bar{u}\|_{L^2}^2
+\|\bar{\tau}\|_{L^2}^2\right)ds
\leq\frac{1}{4}\big(\|\bar{v}\|_{L^\infty(L^2)}^2+\|\bar{\theta}\|_{L^\infty(L^2)}^2
+\int_0^T\|\nabla\bar{v}\|_{L^2}^2ds\big).
\end{eqnarray*}
Hence, $\Phi$ is contractive as a mapping from  $Y(T_\star)$ to $Y(T_\star)$.
The assertion of Proposition \ref{local} thus follows from Lemma \ref{fix}.

\rightline{$\Box$}

\section{Proof of the Main Theorem}

Let $(u,p,\tau)$ be the local solution to equation \ref{OB} constructed in Proposition
 \ref{local}. We recall from this proposition that
\begin{eqnarray*}
u &\in& L^2(0,T_\star;H^3)\cap C([0,T_\star];D(A)) \mbox{ with } u' \in
L^2(0,T_\star;V)\cap C([0,T_\star];L^2_\sigma) \mbox{ and} \\
\tau &\in& C([0,T_\star];H^2) \mbox{ with } \tau' \in C([0,T_\star];H^1).
\end{eqnarray*}
Our proof for the existence of a unique, global solution to \eqref{OB} is based on the following
a priori estimates for $u, \tau, u'$ and $\tau'$.

Let us begin with an a priori estimate for $\tau$. To this end, we take the inner product
of $(\ref{OB})_3$ with $\tau$ and obtain

\begin{equation}\label{tau_L^2}
\frac{\We}{2}\frac{d}{dt}\|\tau\|_{L^2}^2+\|\tau\|_{L^2}^2
=2\alpha(D(u)|\tau)-\We(g_a(\tau, \nabla u)|\tau)
\leq 2\alpha\|\nabla u\|_{L^2}\|\tau\|_{L^2}+C \We\|\nabla u\|_{H^2}\|\tau\|_{L^2}^2.
\end{equation}

Similarly,
\begin{eqnarray*}
\frac{\We}{2}\frac{d}{dt}\|\nabla\tau\|_{L^2}^2+\|\nabla\tau\|_{L^2}^2
&=&2\alpha(\nabla D(u)|\nabla\tau)-\We(\nabla g_a(\tau, \nabla u)|\nabla\tau)-
\We(\partial_lu^k\partial_k\tau^{ij}|\partial_l\tau^{ij})\\
&\leq&2\alpha\|\nabla^2 u\|_{L^2}\|\nabla\tau\|_{L^2}+C \We\|\nabla
u\|_{H^2}\|\nabla\tau\|_{L^2}^2+C \We\|\nabla^2 u\|_{L^2}\|\tau\|_{L^\infty}\|\nabla\tau\|_{L^2},
\end{eqnarray*}
and for $i,j,k,l,m \in \{1,2,3\}$
\begin{eqnarray*}
\frac{\We}{2}\frac{d}{dt}\|\nabla^2\tau\|_{L^2}^2 +\|\nabla^2\tau\|_{L^2}^2
&=&2\alpha(\nabla^2 D(u)|\nabla^2\tau) - \We(\partial_{lm} g_a(\tau, \nabla u)^{ij}|
\partial_{lm}\tau^{ij})-\We(\partial_{lm}u^k\partial_k\tau^{ij}|\partial_{lm}\tau^{ij})\\
&& - \We(\partial_mu^k\partial_{kl}\tau^{ij}|\partial_{lm}\tau^{ij}) -
\We(\partial_lu^k\partial_{km}\tau^{ij}|\partial_{lm}\tau^{ij})\\
&\leq& 2\alpha\|\nabla^3 u\|_{L^2}\|\nabla^2\tau\|_{L^2}
 +C \We\|\nabla u\|_{H^2}\|\nabla^2\tau\|_{L^2}^2 + \\
&& C \We\|\nabla^3 u\|_{L^2}\|\tau\|_{L^\infty}\|\nabla^2\tau\|_{L^2}
+ C \We \|\nabla^2 u\|_{H^1}\|\nabla\tau\|_{H^1}\|\nabla^2\tau\|_{L^2}.
\end{eqnarray*}
Combing the above three inequalities, we obtain
\begin{eqnarray*}
\frac{\We}{2}\frac{d}{dt}\|\tau\|_{H^2}^2+\|\tau\|_{H^2}^2
&\leq&2\alpha\|\nabla u\|_{H^2}\|\tau\|_{H^2}+C \We\|\nabla u\|_{H^2}\|\tau\|_{H^2}^2\\
&\leq&\frac{1}{2}\|\tau\|^2_{L^2}+C\alpha^2\|\nabla u\|_{H^2}^2+C \frac{\We^2}{\alpha^2}
\|\tau\|_{H^2}^4,
\end{eqnarray*}
and thus
\begin{equation}\label{tau_H^2}
\We\frac{d}{dt}\|\tau\|_{H^2}^2+\|\tau\|_{H^2}^2
\leq C\alpha^2\|\nabla u\|_{H^2}^2+C \frac{\We^2}{\alpha^2}\|\tau\|_{H^2}^4.
\end{equation}

Before estimating $u$, let us first apply the Helmholz projection  $\mathbb{P}$ to
$(\ref{OB})_1$. This yields
\begin{equation}\label{Pu}
\Rey(\partial_tu+\mathbb{P}((u\cdot\nabla)u))+(1-\alpha)Au=\mathbb{P}\dv\tau.
\end{equation}
Next, we estimate the term $\|\nabla u\|_{H^2}$ appearing in the right hand side
of (\ref{tau_H^2}). Corollary \ref{psS} and Remark \ref{AsS} imply that
\begin{equation}\label{nabla u_H^1}
\|\nabla^2 u\|_{H^1}\leq C\left(\|Au\|_{H^1}+\|\nabla u\|_{L^2}\right).
\end{equation}
We deduce from  equation (\ref{Pu}) that
\begin{eqnarray}\label{nabla u_H^1+}
\|\nabla Au\|_{L^2}&\leq&\frac{\Rey}{1-\alpha}\|\nabla u_t\|_{L^2} +
\frac{\Rey}{1-\alpha}\|\nabla\mathbb{P}((u\cdot\nabla)u)\|_{L^2} +
\frac{1}{1-\alpha}\|\nabla\mathbb{P}\dv\tau\|_{L^2}\nn \\
&\leq&\frac{\Rey}{1-\alpha}\|\nabla
u_t\|_{L^2}+\frac{\Rey}{1-\alpha}\|(u\cdot\nabla)u\|_{H^1} +
\frac{1}{1-\alpha}\|\nabla\mathbb{P}\dv\tau\|_{L^2}
\end{eqnarray}
By the  Gagliardo-Nirenberg as well as by Sobolev's inequality, we have
\begin{equation}\label{GN}
\|u\|_{L^\infty}\leq C\|u\|_{L^6}^{\frac{1}{2}}\|\nabla u\|_{L^6}^{\frac{1}{2}}
\leq C\|\nabla u\|_{L^2}^{\frac{1}{2}}\|\nabla u\|_{H^1}^{\frac{1}{2}},
\end{equation}
which allows us to bound the term  $\|(u\cdot\nabla)u\|_{H^1}$ as
\begin{eqnarray}\label{nabla u_H^1++}
\|(u\cdot\nabla)u\|_{H^1}&=&\|\nabla((u\cdot\nabla)u)\|_{L^2}+\|(u\cdot\nabla)u\|_{L^2}\nn \\
&\leq&\|\nabla u\|^2_{L^4}+\|u\|_{L^\infty}\|\nabla^2u\|_{L^2}+\|u\|_{L^6}\|\nabla u\|_{L^3}
\nn\\
&\leq&C\left(\|\nabla u\|_{H^1}^2 +
\|u\|_{L^6}^{\frac{1}{2}}\|\nabla u\|_{L^6}^{\frac{1}{2}}\|\nabla^2u\|_{L^2}
+ \|\nabla u\|_{L^2}^{\frac{3}{2}}\|\nabla u\|_{H^1}^{\frac{1}{2}}\right)\nn\\
&\leq&C\left(\|\nabla u\|_{H^1}^2+\|\nabla u\|_{L^2}^{\frac{1}{2}}
\|\nabla u\|_{H^1}^{\frac{3}{2}}+\|\nabla u\|_{L^2}^{\frac{3}{2}}
\|\nabla u\|_{H^1}^{\frac{1}{2}}\right)\nn\\
&\leq&C\|\nabla u\|_{H^1}^2.
\end{eqnarray}
Recalling Corollary \ref{psS} and Remark \ref{AsS}, we infer that
\begin{equation}\label{nabla^2uL^2}
\|\nabla u\|_{H^1}\leq C\left(\|Au\|_{L^2}+\|\nabla u\|_{L^2}\right).
\end{equation}
Combing now the estimates (\ref{nabla u_H^1})--(\ref{nabla u_H^1++}) with (\ref{nabla^2uL^2})
yields
\begin{eqnarray}\label{nabla u_H^2}
\nonumber\|\nabla u\|_{H^2}^2&\leq&C\big(\|Au\|_{L^2}^2 + \|\nabla
u\|_{L^2}^2+\frac{\Rey^2}{(1-\alpha)^2}\|\nabla u_t\|_{L^2}^2 +
\frac{1}{(1-\alpha)^2}\|\nabla\mathbb{P}\dv\tau\|_{L^2}^2\\
&&+\frac{\Rey^2}{(1-\alpha)^2}\|Au\|_{L^2}^4 +
\frac{\Rey^2}{(1-\alpha)^2}\|\nabla u\|_{L^2}^4\big).
\end{eqnarray}
Finally,  estimate (\ref{tau_H^2}) combined with estimate (\ref{nabla u_H^2}) implies that
\begin{eqnarray}\label{tau_H^22}
\We\frac{d}{dt}\|\tau\|_{H^2}^2+\|\tau\|_{H^2}^2+2\|\nabla u\|_{H^2}^2
&\leq&\kappa_1\left(\|Au\|_{L^2}^2+\|\nabla u\|_{L^2}^2+\|\nabla u_t\|_{L^2}^2 +
\|\nabla\mathbb{P}\dv\tau\|_{L^2}^2\right)\nn\\
&& + C \left(\|Au\|_{L^2}^4+\|\tau\|_{H^2}^4+\|\nabla
u\|_{L^2}^4\right),
\end{eqnarray}
for some $\kappa_1>0$.

Next, taking the inner product of (\ref{Pu})
with $u$, we obtain
$$
\frac{\Rey}{2}\frac{d}{dt}\|u\|_{L^2}^2+(1-\alpha)\|\nabla u\|_{L^2}^2=(\dv \tau|u).
$$
Adding this equation to equation (\ref{tau_L^2}), integrating by parts and using the
fact that $\tau$ is symmetric, yields
\begin{eqnarray*}
\frac{1}{2}\frac{d}{dt}(\Rey\|u\|_{L^2}^2+\frac{\We}{2\alpha}\|\tau\|_{L^2}^2)
+ (1-\alpha)\|\nabla u\|_{L^2}^2+\frac{1}{2\alpha}\|\tau\|^2_{L^2}
&=&-\frac{\We}{2\alpha}(g_a(\tau, \nabla u)|\tau)\\
&\leq& \frac{C \We}{2\alpha}
\|\tau\|_{L^\infty}\|\nabla u\|_{L^2}\|\tau\|_{L^2}\\
&\leq&\frac{1-\alpha}{2}\|\nabla u\|_{L^2}^2+\frac{C \We^2}{(1-\alpha)\alpha^2}\|\tau\|^4_{H^2},
\end{eqnarray*}
which means that
\begin{equation}\label{utau_L^2}
\frac{d}{dt}(\Rey\|u\|_{L^2}^2+\frac{\We}{2\alpha}\|\tau\|_{L^2}^2) +
(1-\alpha)\|\nabla u\|_{L^2}^2+\frac{1}{\alpha}\|\tau\|^2_{L^2}
\leq\frac{C \We^2}{(1-\alpha)\alpha^2}\|\tau\|^4_{H^2}.
\end{equation}

Taking the inner product of (\ref{Pu}) with $Au$ yields
\begin{eqnarray*}
\frac{\Rey}{2}\frac{d}{dt}\|\nabla u\|_{L^2}^2 + (1-\alpha)\|Au\|_{L^2}^2
&\leq& \frac{1-\alpha}{2}\|Au\|^2_{L^2} + \frac{1}{1-\alpha}\|\mathbb{P}\dv\tau\|^2_{L^2} +
\frac{\Rey^2}{1-\alpha}\|\mathbb{P}(u\cdot\nabla )u\|^2_{L^2}.
\end{eqnarray*}
Further, since
\begin{equation}\label{convection}
\|\mathbb{P}(u\cdot\nabla )u\|^2_{L^2}\leq C\|(u\cdot\nabla )u\|^2_{L^2}\leq
C\|u\|_{L^6}^2\|\nabla u\|^2_{L^3}\leq C\|\nabla u\|_{L^2}^3\|\nabla u\|_{H^1},
\end{equation}
it follows that
\begin{equation}\label{nabla u_L^2}
\frac{d}{dt}(\Rey\|\nabla u\|_{L^2}^2) + (1-\alpha)\|Au\|_{L^2}^2
\leq\frac{2}{1-\alpha}\|\mathbb{P}\dv\tau\|^2_{L^2} + \varepsilon\|\nabla u\|_{H^1}^2 +
\frac{C \Rey^4}{\ve(1-\alpha)^2}\|\nabla u\|^6_{L^2}.
\end{equation}
Similarly, taking the inner product of (\ref{Pu}) with $u'$, and using (\ref{convection})
once more, we see that
\begin{equation}\label{nabla u_L^22}
\frac{d}{dt}((1-\alpha)\|\nabla u\|_{L^2}^2)+\Rey\|u'\|_{L^2}^2
\leq\frac{2}{\Rey}\|\mathbb{P}\dv\tau\|^2_{L^2}+\ve \|\nabla u\|_{H^1}^2 +
\frac{C \Rey^2}{\ve}\|\nabla u\|^6_{L^2}
\end{equation}
for $\ve>0$.
In view of (\ref{nabla^2uL^2}), (\ref{nabla u_L^2}) and (\ref{nabla u_L^22}) and by
choosing $\ve$ small enough, we are led to
\begin{eqnarray}\label{nabla u_L^222}
\nonumber&&\frac{d}{dt}\left((2\Rey+1-\alpha)\|\nabla
u\|_{L^2}^2\right) + \Rey\|\partial_tu\|_{L^2}^2 +
(1-\alpha)\|Au\|_{L^2}^2\\
&\leq&\kappa_2\left(\|\nabla
u\|_{L^2}^2+\|\mathbb{P}\dv\tau\|^2_{L^2}\right)+C\|\nabla
u\|^6_{L^2},
\end{eqnarray}
for some $\kappa_2>0$. Next, differentiating equations
$(\ref{OB})_1$ and $(\ref{OB})_3$ with respect to $t$, and taking
the inner product of the resulting equations with $\partial_t u$ and
$\partial_t\tau$, respectively, we obtain
\begin{equation*}
\frac{\Rey}{2}\frac{d}{dt}\|\partial_t u\|_{L^2}^2+(1-\alpha)\|\nabla
u_t\|_{L^2}^2=(\dv\tau_t|\partial_tu)-\Rey((u_t\cdot\nabla)u|\partial_tu),
\end{equation*}
as well as
\begin{eqnarray*}
\frac{\We}{2}\frac{d}{dt}\|\partial_t \tau\|_{L^2}^2+\|
\partial_t\tau\|_{L^2}^2&=&2\alpha(D(u_t)|\partial_t\tau) -
\We((u_t\cdot\nabla)\tau|\partial_t\tau)\\
&&-\We(g_a(\tau_t, \nabla u)|\tau_t)-\We(g_a(\tau,\nabla u_t)|\partial_t\tau).
\end{eqnarray*}
It follows that
\begin{eqnarray*}
&&\frac{1}{2}\frac{d}{dt}\left(\Rey\|\partial_tu\|_{L^2}^2+\frac{\We}{2\alpha}
\|\partial_t\tau\|_{L^2}^2\right)+(1-\alpha)\|\nabla u_t\|_{L^2}^2+\frac{1}{2\alpha}\|
\partial_t\tau\|_{L^2}^2\\
&\leq&C\|\nabla u\|_{H^2}(\Rey\|\partial_tu\|_{L^2}^2 +
\frac{\We}{2\alpha}\|\partial_t\tau\|_{L^2}^2)
+\frac{\We}{2\alpha}\|\partial_tu\|_{L^6}\|\nabla\tau\|_{L^3}\|\partial_t\tau\|_{L^2}\\
&&+C\frac{\We}{2\alpha}\|\nabla u_t\|_{L^2}\|\tau\|_{L^\infty}\|\partial_t\tau\|_{L^2}\\
&\leq&C\|\nabla u\|_{H^2}(\Rey\|\partial_tu\|_{L^2}^2 +
\frac{\We}{2\alpha}\|\partial_t\tau\|_{L^2}^2)
+ C \frac{\We}{2\alpha}\|\nabla u_t\|_{L^2}\|\tau\|_{H^2}\|\partial_t\tau\|_{L^2},
\end{eqnarray*}
and by Young's inequality that
\begin{eqnarray}\label{u_ttau_t}
\nonumber&&\frac{d}{dt}\left(\Rey\|\partial_tu\|_{L^2}^2 +
\frac{\We}{2\alpha}\|\partial_t\tau\|_{L^2}^2\right) +
(1-\alpha)\|\nabla u_t\|_{L^2}^2+\frac{1}{\alpha}\|\partial_t\tau\|_{L^2}^2\\
&\leq&\epsilon\|\nabla u\|_{H^2}^2+\frac{C}{\epsilon}(\Rey^2\|\partial_tu\|_{L^2}^4 +
\frac{\We^2}{4\alpha^2}\|\partial_t\tau\|_{L^2}^4)
+\frac{C \We^2}{\alpha^2(1-\alpha)}\|\tau\|_{H^2}^2\|\partial_t\tau\|_{L^2}^2.
\end{eqnarray}

Next, following an idea of Molinet and Talhouk \cite{MT04}, we estimate
$\hp\dv\tau$ and $\curl \dv\tau$, which will be then used in order to control
$\|\hp\dv\tau\|_{H^1}$. In order to  do so, we take the divergence of
$(\ref{OB})_3$ and,  using the incompressible condition, we obtain
$$
\We\left(\dv\tau_t+\dv((u\cdot\nabla)\tau)\right)+\dv\tau+\We\dv g_a(\tau, \nabla u)
=\alpha\Delta u,
$$
which, together with the equation of $u$, implies that
\begin{equation}\label{divtau}
\frac{1-\alpha}{\alpha}\We\dv\tau_t+\frac{1}{\alpha}\dv\tau=-\frac{1-\alpha}{\alpha}
\We\left[\dv((u\cdot\nabla)\tau)+\dv g_a(\tau,\nabla u)\right] +
\Rey\left[\partial_tu+(u\cdot\nabla)u\right]+\nabla p.
\end{equation}
Applying the Helmholtz projection  $\hp$ to this equation yields
\begin{equation*}
\frac{1-\alpha}{\alpha}\We\hp\dv\tau_t+\frac{1}{\alpha}\hp\dv\tau=
\Rey\partial_t u +
\hp\big[\Rey(u\cdot\nabla)u-\frac{1-\alpha}{\alpha}\We
\left(\dv((u\cdot\nabla)\tau)+\dv g_a(\tau,\nabla u)\right)\big].
\end{equation*}
Taking the inner product of the above equation with $\hp\dv\tau$ and
integrating by parts, we deduce that
\begin{eqnarray}
\nonumber&&\frac{1-\alpha}{\alpha}\frac{\We}{2}\frac{d}{dt}\|\hp\dv\tau\|_{L^2}^2 +
\frac{1}{\alpha}\|\hp\dv\tau\|_{L^2}^2 = -\Rey(\nabla u_t|\tau) +
\Rey(\hp(u\cdot\nabla)u|\hp\dv\tau)\\
&&\ \ \ -\frac{1-\alpha}{\alpha}\We(\hp\dv((u\cdot\nabla)\tau)|\hp\dv\tau) -
\frac{1-\alpha}{\alpha}\We(\hp\dv g_a(\tau,\nabla u)|\hp\dv\tau).
\end{eqnarray}
Note that
\begin{eqnarray*}
\Rey\left|(\hp((u\cdot\nabla)u)|\hp\dv\tau)\right|&\leq&
\Rey\|u\|_{L^6}\|\nabla u\|_{L^3}\|\tau\|_{H^1}
\leq C \Rey\|\nabla u\|_{L^2}\|\nabla u\|_{H^1}\|\tau\|_{H^1} \\
&\leq&\frac{\ve}{6}\|\nabla u\|_{H^1}^2 + \frac{C \Rey^2}{\ve} \|\nabla u\|_{L^2}^2
\|\tau\|_{H^1}^2,
\end{eqnarray*}
and
\begin{eqnarray*}
\frac{1-\alpha}{\alpha}\We\left|(\hp\dv((u\cdot\nabla)\tau)|\hp\dv\tau)\right|
&\leq& C\frac{1-\alpha}{\alpha}\We\left(\||\nabla u||\nabla \tau|\|_{L^2}+
\||u||\nabla^2\tau|\|_{L^2}\right)\|\tau\|_{H^1}\\
&\leq&C\frac{1-\alpha}{\alpha}\We\left(\|\nabla u\|_{L^6}\|\nabla \tau\|_{L^3} +
\|u\|_{L^\infty}\|\nabla^2\tau\|_{L^2}\right)\|\tau\|_{H^1}\\
&\leq&C\frac{1-\alpha}{\alpha}\We\|\nabla u\|_{H^1}\|\tau\|_{H^2}^2 \leq
\frac{\ve}{6}\|\nabla u\|_{H^1}^2 + \frac{C \We^2(1-\alpha)^2}{\ve\alpha^2}\|\tau\|_{H^2}^4,
\end{eqnarray*}
where we have used the Gagliardo-Nirenberg inequality (\ref{GN}) again.

Moreover, a direct calculation yields
$$
\frac{1-\alpha}{\alpha}\We\left|(\hp\dv g_a(\tau,\nabla u)|\hp\dv\tau)\right|\leq
\frac{\ve}{6}\|\nabla u\|_{H^1}^2+\frac{C \We^2(1-\alpha)^2}{\ve\alpha^2}\|\tau\|_{H^2}^4.
$$
which implies
\begin{equation}\label{divtau_L^2}
\frac{d}{dt}(\frac{1-\alpha}{\alpha}\We\|\hp\dv\tau\|_{L^2}^2) +
\frac{2}{\alpha}\|\hp\dv\tau\|_{L^2}^2
 \leq \Rey^2\|\nabla u_t\|_{L^2}^2+\|\tau\|_{L^2}^2+\ve\|\nabla u\|_{H^1}^2 +
C_\ve\left(\|\nabla u\|_{L^2}^4+\|\tau\|_{H^2}^4\right).
\end{equation}
Applying the $\curl$ operator to the equation (\ref{divtau}) we obtain
\begin{eqnarray*}
&&\frac{1-\alpha}{\alpha}\We \curl\dv\tau_t+\frac{1}{\alpha}\curl\dv\tau= \\
&& \qquad \Rey[\curl u_t+\curl((u\cdot\nabla)u)]-\frac{1-\alpha}{\alpha}
\We[\curl\dv((u\cdot\nabla)\tau)+\curl\dv g_a(\tau,\nabla u)].
\end{eqnarray*}
Taking the inner product of this  equation with $\curl\dv\tau$ yields
\begin{eqnarray}\label{curldiv}
&&\frac{1-\alpha}{\alpha}\frac{\We}{2}\frac{d}{dt}\|\curl\dv\tau\|_{L^2}^2 +
\frac{1}{\alpha}\|\curl\dv\tau\|_{L^2}^2 \leq \nn\\
&&\frac{1}{2\alpha}\|\curl\dv\tau\|_{L^2}^2
+\frac{C\alpha \Rey^2}{2}\|\nabla u_t\|_{L^2}^2
+\Rey(\curl((u\cdot\nabla)u)|\curl\dv\tau)- \nn\\
&&\frac{1-\alpha}{\alpha}\We(\curl\dv((u\cdot\nabla)\tau)|\curl\dv\tau)
- \frac{1-\alpha}{\alpha}\We(\curl\dv g_a(\tau, \nabla u)|\curl\dv\tau).
\end{eqnarray}
Noting  that
\begin{eqnarray*}
[\curl\dv((u\cdot\nabla)\tau)]_{lj}&=&(u\cdot\nabla)(\curl\dv\tau)_{lj}+
(\partial_{li}u^k\partial_k\tau^{ij}-\partial_{ji}u^k\partial_k\tau^{il})\\
&&+(\partial_iu^k\partial_{kl}\tau^{ij}-\partial_iu^k\partial_{kj}\tau^{il})+
(\partial_lu^k\partial_{ki}\tau^{ij}-\partial_ju^k\partial_{ki}\tau^{il}),
\end{eqnarray*}
we obtain
\begin{eqnarray*}
\frac{1-\alpha}{\alpha}\We\left|(\curl\dv((u\cdot\nabla)\tau)|\curl\dv\tau)\right|
&\leq& C\frac{1-\alpha}{\alpha}\We\left(\|\nabla^2u\|_{L^6}\|\nabla\tau\|_{L^3}+
\|\nabla u\|_{L^\infty}\|\nabla^2\tau\|_{L^2}\right)\|\nabla^2\tau\|_{L^2}\\
&\leq&\frac{\ve}{6}\|\nabla u\|_{H^2}^2+\frac{C(1-\alpha)^2\We^2}{\ve\alpha^2}\|\tau\|_{H^2}^4.
\end{eqnarray*}
Moreover,
\begin{eqnarray*}
\Rey\left|(\curl((u\cdot\nabla)u)|\curl\dv\tau)\right|&\leq&
C \Rey (\||\nabla u|^2\|_{L^2}+\||u||\nabla^2u|\|_{L^2})\|\nabla^2\tau\|_{L^2}\\
&\leq&C \Rey (\|\nabla u\|_{L^\infty}\|\nabla u\|_{L^2}+\|u\|_{L^6}\|\nabla^2u\|_{L^3})
\|\nabla^2\tau\|_{L^2}\\
&\leq&C \Rey \|\nabla u\|_{H^2}\|\nabla u\|_{L^2}\|\tau\|_{H^2}\leq\frac{\ve}{6}
\|\nabla u\|_{H^2}^2+\frac{C \Rey^2}{\ve}\|\nabla u\|_{L^2}^2\|\tau\|_{H^2}^2,
\end{eqnarray*}
and a direct computation yields
\begin{equation*}
\frac{1-\alpha}{\alpha}\We\left|(\curl\dv g_a(\tau, \nabla u)|\curl\dv\tau)\right|
\leq\frac{\ve}{6}\|\nabla u\|_{H^2}^2+\frac{C(1-\alpha)^2\We^2}{\ve\alpha^2}\|\tau\|_{H^2}^4.
\end{equation*}
Substituting these estimates in (\ref{curldiv}), we obtain
\begin{equation}\label{curldiv_L^2}
\frac{1-\alpha}{\alpha}\We\frac{d}{dt}\|\curl\dv\tau\|_{L^2}^2 +
\frac{1}{\alpha}\|\curl\dv\tau\|_{L^2}^2
\leq C\alpha \Rey^2\|\nabla u_t\|_{L^2}^2+\ve\|\nabla u\|_{H^2}^2 +
C_\ve(\|\nabla u\|_{L^2}^4+\|\tau\|_{H^2}^4).
\end{equation}
Putting together (\ref{divtau_L^2}) and (\ref{curldiv_L^2}) yields
\begin{eqnarray}\label{divtau curltau}
\frac{d}{dt}\left(\frac{1-\alpha}{\alpha}\We(\|\hp\dv\tau\|_{L^2}^2 +
\|\curl\dv\tau\|_{L^2}^2)\right) &+& \frac{1}{\alpha}
\left(\|\hp\dv\tau\|_{L^2}^2+\|\curl\dv\tau\|_{L^2}^2\right)\nn\\
&\leq&2\ve\|\nabla u\|_{H^2}^2+\kappa_3\|\nabla
u_t\|_{L^2}^2+\|\tau\|_{L^2}^2 +C_{\ve}(\|\nabla
u\|_{L^2}^4+\|\tau\|_{H^2}^4),
\end{eqnarray}
for some $\kappa_3>0$. On the other hand, multiplying equation
(\ref{nabla u_L^222}) with $\frac{\kappa_1+1}{1-\alpha}$ and adding
to (\ref{tau_H^22})
 yields
\begin{eqnarray}\label{taunablau}
\frac{d}{dt}(\We\|\tau\|_{H^2}^2 &+&\frac{(\kappa_1+1)(2\Rey+1-\alpha)}{1-\alpha}
\|\nabla u\|_{L^2}^2) +\frac{(\kappa_1+1)\Rey}{1-\alpha}\|\partial_tu\|_{L^2}^2+\|Au\|_{L^2}^2 +
\|\tau\|_{H^2}^2 + 2\|\nabla u\|_{H^2}^2 \nn\\
&\leq&\kappa_4\left(\|\nabla u\|_{L^2}^2+\|\nabla u_t\|_{L^2}^2+\|\hp\dv\tau\|_{H^1}^2\right)
+C\left(\|Au\|_{L^2}^4+\|\tau\|_{H^2}^4+\|\nabla
u\|_{L^2}^4+\|\nabla u\|_{L^2}^6\right),
\end{eqnarray}
for some $\kappa_4>0$.

Finally, we estimate $\|\hp\dv \tau\|_{H^1}$ in the right hand side
of (\ref{taunablau}). Notice first that in view of the Helmholtz
decomposition, we verify that $\curl\dv\tau=\curl\hp\dv\tau$.
Moreover, since $\dv(\hp\dv\tau)=0$ and $(\hp\dv\tau)\cdot\nu=0$, in
virtue of Proposition \ref{propunb}, there exists a constant $C_0$
such that
\begin{equation}\label{pdtau}
\|\hp\dv\tau\|_{H^1}^2\leq C_0\left(\|\hp\dv\tau\|_{L^2}^2+\|\curl\dv\tau\|_{L^2}^2\right).
\end{equation}
Then multiplying \eqref{divtau curltau} with $\alpha(\kappa_4C_0+1)$ and adding to
(\ref{taunablau}) implies that
\begin{eqnarray*}\label{taunablaupdtau}
&&\frac{d}{dt}\left(\We\|\tau\|_{H^2}^2+\frac{(\kappa_1+1)(2\Rey+1-\alpha)}{1-\alpha}
\|\nabla u\|_{L^2}^2+(1-\alpha)(\kappa_4C_0+1)\We(\|\hp\dv\tau\|_{L^2}^2
+\|\curl\dv\tau\|_{L^2}^2)\right)\nn \\
&&+\frac{(\kappa_1+1)\Rey}{1-\alpha}\|\partial_tu\|_{L^2}^2+\|Au\|_{L^2}^2
+\|\tau\|_{H^2}^2+2\|\nabla u\|_{H^2}^2+\|\hp\dv\tau\|_{L^2}^2+\|\curl\dv\tau\|_{L^2}^2\nn\\
&\leq&\kappa_5\left(\ve\|\nabla u\|_{H^2}^2+\|\nabla u\|_{L^2}^2+\|\nabla u_t\|_{L^2}^2
+\|\tau\|_{L^2}^2\right)+C_{\ve}\left(\|Au\|_{L^2}^4+\|\tau\|_{H^2}^4+\|\nabla
u\|_{L^2}^4+\|\nabla u\|_{L^2}^6\right),
\end{eqnarray*}
for some $\kappa_5>0$.

The term $\|\nabla u_t\|_{L^2}^2$ in the right hand side of above
can be absorbed into the left hand side by means of
(\ref{u_ttau_t}). Indeed, multiplying (\ref{u_ttau_t}) by
$\frac{\kappa_5+1}{1-\alpha}$, adding the resulting equation to
(\ref{taunablaupdtau}) and choosing
$\ve=\frac{1-\alpha}{\kappa_5(2-\alpha)+1}$, we infer that
\begin{eqnarray}\label{beforeend}
\nn&&\frac{d}{dt}\big(\We\|\tau\|_{H^2}^2+\frac{(\kappa_1+1)(2\Rey
+1-\alpha)}{1-\alpha}\|\nabla u\|_{L^2}^2+\frac{\Rey(\kappa_5+1)}{1-\alpha}
\|\partial_tu\|_{L^2}^2 + \frac{\We(\kappa_5+1)}{2\alpha(1-\alpha)}
\|\partial_t\tau\|_{L^2}^2 + \\
\nn&&(1-\alpha)(\kappa_4C_0+1)\We(\|\hp\dv\tau\|_{L^2}^2+\|\curl\dv\tau\|_{L^2}^2)\big)
+\frac{\Rey(\kappa_1+1)}{1-\alpha}\|\partial_tu\|_{L^2}^2 +\\
\nn&& \|Au\|_{L^2}^2+\|\tau\|_{H^2}^2+\|\nabla u\|_{H^2}^2+\|\nabla u_t\|_{L^2}^2
+\frac{(\kappa_5+1)}{\alpha(1-\alpha)}\|\partial_t\tau\|_{L^2}^2
+\|\hp\dv\tau\|_{L^2}^2+\|\curl\dv\tau\|_{L^2}^2\\
\nn&\leq&C\left(\|Au\|_{L^2}^4+\|\tau\|_{H^2}^4+\|\nabla u\|_{L^2}^4
+\|\nabla u\|_{L^2}^6+\|\partial_tu\|_{L^2}^4+\|\partial_t\tau\|_{L^2}^4\right)
+ \kappa_6\left(\|\nabla u\|_{L^2}^2+\|\tau\|_{L^2}^2\right),
\end{eqnarray}
for some $\kappa_6>0$. Finally, using the assumption $0<\alpha<1$
and multiplying \eqref{utau_L^2} with $\frac{\kappa_6+1}{1-\alpha}$
and adding it to (\ref{beforeend}) yields
\begin{eqnarray}\label{end}
\nn && \frac{d}{dt}\big(\frac{\Rey(\kappa_6+1)}{1-\alpha}\|u\|_{L^2}^2
+ \frac{\We(\kappa_6+1)}{2\alpha(1-\alpha)}\|\tau\|_{L^2}^2+\We\|\tau\|_{H^2}^2
+ \frac{(\kappa_1+1)(2\Rey+1-\alpha)}{1-\alpha}\|\nabla u\|_{L^2}^2 \nn\\
\nn && + \frac{\Rey(\kappa_5+1)}{1-\alpha}\|\partial_t u\|_{L^2}^2
+\frac{\We(\kappa_5+1)}{2\alpha(1-\alpha)}\|\partial_t\tau\|_{L^2}^2
+(1-\alpha)(\kappa_4C_0+1)\We(\|\hp\dv\tau\|_{L^2}^2+ \nn \\
\nn && \|\curl\dv\tau\|_{L^2}^2)\big) +\frac{\Rey(\kappa_1+1)}{1-\alpha}\|\partial_tu\|_{L^2}^2
+\|Au\|_{L^2}^2+\|\tau\|_{H^2}^2+\|\nabla u\|_{H^2}^2+\|\nabla u_t\|_{L^2}^2\\
\nn &&+ \frac{(\kappa_5+1)}{\alpha(1-\alpha)}\|\partial_t\tau\|_{L^2}^2
+\|\nabla u\|_{L^2}^2+\|\tau\|^2_{L^2}+\|\hp\dv\tau\|_{L^2}^2+\|\curl\dv\tau\|_{L^2}^2\\
&&\leq C\left(\|Au\|_{L^2}^4+\|\tau\|_{H^2}^4+\|\nabla u\|_{L^2}^4
+\|\nabla u\|_{L^2}^6+\|\partial_tu\|_{L^2}^4+\|\partial_t\tau\|_{L^2}^4\right).
\end{eqnarray}
After all these calculations, we are glad to define the functions $F:[0,\infty)\to \R$,
$G:[0,\infty)\to \R$ and $H:[0,\infty)\to \R$ as
\begin{eqnarray*}\label{F}
F(t)&:=&(1-\alpha)(\kappa_4C_0+1)\We(\|\hp\dv\tau\|_{L^2}^2+\|\curl\dv\tau\|_{L^2}^2)\\
\nn&&+\frac{\Rey(\kappa_6+1)}{1-\alpha}\|u\|_{L^2}^2
+\frac{\We(\kappa_6+1)}{2\alpha(1-\alpha)}\|\tau\|_{L^2}^2+\We\|\tau\|_{H^2}^2\\
\nn&&+\frac{(\kappa_1+1)(2\Rey+1-\alpha)}{1-\alpha}\|\nabla u\|_{L^2}^2
+\frac{\Rey(\kappa_5+1)}{1-\alpha}\|\partial_tu\|_{L^2}^2\\
\nn&&+\frac{\We(\kappa_5+1)}{2\alpha(1-\alpha)}\|\partial_t\tau\|_{L^2}^2,
\end{eqnarray*}
and
\begin{eqnarray*}\label{G}
 G(t)&:=&\frac{\Rey(\kappa_1+1)}{1-\alpha}\|\partial_tu\|_{L^2}^2
+\|Au\|_{L^2}^2+\|\tau\|_{H^2}^2+\|\nabla u\|_{H^2}^2+\|\nabla u_t\|_{L^2}^2\\
\nn&&+\frac{(\kappa_5+1)}{\alpha(1-\alpha)}\|\partial_t\tau\|_{L^2}^2
+\|\nabla u\|_{L^2}^2+\|\tau\|^2_{L^2}+\|\hp\dv\tau\|_{L^2}^2+\|\curl\dv\tau\|_{L^2}^2,
\end{eqnarray*}
and
\begin{eqnarray*}\label{H}
H(t):=\|\partial_tu\|_{L^2}^2+\|Au\|_{L^2}^2+\|\tau\|_{H^2}^2
+\|\partial_t\tau\|_{L^2}^2+\|\nabla u\|_{L^2}^2+\|\nabla u\|_{L^2}^4.
\end{eqnarray*}
We now rewrite (\ref{end}) as an inequality of the form
\begin{equation}\label{FHG}
\frac{d}{dt}F(t)+G(t)\leq C H(t)G(t)
\end{equation}
and estimate $H(t)$ in terms of $F(t)$. To this end, observe that by
 (\ref{Pu}), (\ref{convection}) and (\ref{nabla^2uL^2}) we deduce that
\begin{equation}\label{Au_L^2}
\|Au\|_{L^2}^2\leq C\left(\|\nabla u\|_{L^2}^2+\|\partial_tu\|_{L^2}^2+\|\hp\dv\tau\|_{L^2}^2+\|\nabla u\|_{L^2}^6\right).
\end{equation}
Hence, there exists a constant  $M_1=M_1(\Rey, \We, \alpha)>0$ such that
\begin{equation}\label{HF}
H(t)\leq M_1(F(t)+F(t)^2+F(t)^3), \quad t\geq 0.
\end{equation}
Substituting (\ref{HF}) into (\ref{FHG}), we get
\begin{eqnarray}\label{FFG}
\frac{d}{dt}F(t)+\left(1-CM_1(F(t)+F(t)^2+F(t)^3)\right)G(t)\leq 0, \quad t\geq 0.
\end{eqnarray}
We are now finally  in the  position to estimates $F(t)$. In order to do so,
define $\delta_0> 0$ small enough, such that
$\delta_0+\delta_0^2+\delta_0^3<\frac{1}{2CM_1}$, where $C$ is the constant appearing
in (\ref{FFG}).

Assume that the differential inequality (\ref{FFG}) holds for all $t\geq0$ and  $F$ being
absolutely continuous and $G$ being nonnegative. Then
\begin{equation}\label{assertion}
F(t)<\delta_0\ \ \mathrm{for\ all}\ \ t\geq0\ \ \mathrm{provided}\ \
F(0)<\delta_0.
\end{equation}
Assume that this assertion were not true. Let $t_1>0$ be the first
time where $F(t)\geq\delta_0$. Then
\begin{equation}\label{F(t_1)}
F(t_1)=\delta_0\ \ \mathrm{and}\ \ F(t)<\delta_0\ \ \mathrm{for\ all}\ \ 0\leq t<t_1.
\end{equation}
Consequently, for all $0\leq t\leq t_1$,
$$
1-CM_1(F(t)+F(t)^2+F(t)^3)\geq1-CM_1(\delta_0+\delta_0^2+\delta_0^3)>\frac12.
$$
Assertion (\ref{FFG}) implies now   that
\begin{eqnarray}\label{FG}
\frac{d}{dt}F(t)+\frac12G(t)\leq 0\ \ \mathrm{for\ all}\ \ 0\leq t\leq t_1.
\end{eqnarray}
Integrating (\ref{FG}) from $0$ to $t_1$, we obtain
\begin{eqnarray}\label{pfFG}
F(t_1)+\frac12\int_0^{t_1}G(s)ds\leq F(0)<\delta_0,
\end{eqnarray}
which contradicts (\ref{F(t_1)}). Thus (\ref{assertion}) holds true.

Given this fact, the proof of Theorem \ref{global} can now be finished easily. In fact, let
$T^\star$ be the lifespan of the local solution $(u,p,\tau)$ given in Proposition \ref{local}.
Assuming that (\ref{small}) holds with $\ve_0$ to be determined below, we verify that
\begin{equation*}
F(0)\leq
C(\|u_0\|_{H^1}^2+\|\tau_0\|_{H^2}^2+\|u_t(0)\|_{L^2}^2 +
\|\tau_t(0)\|_{L^2}^2) \leq C(\ve_0^2+\ve_0^4).
\end{equation*}
Choose now $\ve_0$ such that  $C(\ve_0^2+\ve_0^4)<\delta_0$.  Then,
in virtue of \eqref{assertion} and the proof of \eqref{pfFG},
 we obtain
\begin{equation}\label{gestimates}
\sup_{0\leq t\leq T^\star}F(t)+\frac12\int_0^{T^\star}G(t)dt<\delta_0.
\end{equation}
In particular, it follows from the definition  of $F(t)$ and $G(t)$,
(\ref{gestimates}), (\ref{Au_L^2}) and (\ref{nabla^2uL^2}) that
\begin{eqnarray*}
\nn && \sup_{0\leq t\leq
T^\star}\left(\|u(t)\|_{D(A)}^2+\|u'(t)\|_{L^2}^2+\|\tau(t)\|_{H^2}^2 +
\|\tau'(t)\|_{L^2}^2\right)\nn\\
\nn && +\int_0^{T^\star}(\|\nabla u(t)\|_{H^2}^2+\|\nabla
u'(t)\|_{L^2}^2+\| \tau(t)\|_{H^2}^2+\|
\tau'(t)\|_{L^2}^2)dt\leq C.
\end{eqnarray*}
We thus deduce  that the local solution $(u,p,\tau)$ can be extended for all positive times.
This completes the proof of Theorem \ref{global}.
$\Box$

\vn
{\bf Acknowledgments} This work was carried
out while the first and the third authors are visiting the
Department of Mathematics at the  Technical University of Darmstadt. They
would express their gratitude to Prof. Matthias Hieber for his kind
hospitality and the Deutsche Forschungsgemeinschaft (DFG) for financial support.

\noindent
We would like  thank Paolo Galdi for stimulating discussion concerning Oldroyd-B fluids and the
third author also would like to thank Tobias Hansel for his sincere help.

\noindent
Daoyuan Fang and Ruizhao Zi were partially supported by NSFC 10931007 and ZNSFC Z6100217.

\newpage


\begin{thebibliography}{aaa}


\bibitem{CM01}
J. Y. Chemin, N. Masmoudi, About lifespan of regular solutions of
equations related to viscoelastic fluids. {\em SIAM J. Math. Anal},
33(2001), 84--112.

\bibitem{FGG98}
E. Fern\'{a}ndez-Cara, F. Guill\'{e}n, R. Ortega, Some theoretical
results concerning non-Newtonian fluids of the Oldroyd kind. {\em
Ann. Scuola Norm. Sup. Pisa}, 26(1998), 1--29.

\bibitem{Galdi} G.P. Galdi,
{\em An introduction to the Mathematical Theory of the Navier-Stokes Equations, Vol I},
Springer-Verlag, New York, 1994.

\bibitem{Girault}
V. Girault, P. A. Raviart, {\em Finite Elements Methods for Navier-Stokes
Equations: Theory and Algorithms}, Springer-Verlag, Berlin, 1986.


\bibitem{GS90a}
C. Guillop\'{e}, J. C. Saut, Existence results for the flow of
viscoelastic fluids with a differential constitutive law. {\em
Nonlinear Anal.} 15(1990), 849--869.

\bibitem{HNS12}
M. Hieber, Y. Naito, Y. Shibata, Global existence results for
Oldroyd-B fluids in exterior domains. {\em J. Diff. Eq.} 252 (2012), 2617-2629.


\bibitem{Kreml}
O. Kreml, M. Pokorny,
On the local strong solutions for a system describing the flow of a viscoelastic fluid. In:
{\em Nonlocal and Abstract Parabolic Equations and their Applications},
Banach Center Publ., 86, Polish Acad. Sci. Inst. Math., Warsaw, (2009), 195--206.


\bibitem{Lei06}
Z. Lei,
Global existence of cloassical solutions for some Oldroyd-B model via the incompressible limit.
{\em Chinese Ann. Math.} 27(2006), 565--580.

\bibitem{Lei10}
Z. Lei, On 2D viscoelasticity with small strain. {\em Arch. Ration.
Mech. Anal.} 198(2010), 13--37.

\bibitem{LLZ08}
Z. Lei, C. Liu and Y. Zhou, Global solutions for incompressible
viscoelastic fluids. {\em Arch. Ration. Mech. Anal.} 188(2008),
371--398.


\bibitem{LMZ10}
Z. Lei, N. Masmoudi, Y. Zhou, Remarks on the blowup criteria for
Oldroyd models. {\em J. Diff. Equ.} 248(2010), 328--341.

\bibitem{LZ05}
Z. Lei, Y. Zhou, Global existence of classical solutions for the
two-dimensional Oldroyd model via the incompressible limit. {\em
SIAM J. Math. Anal.} 37(2005), 797--814.

\bibitem{LLZ05}
F. H. Lin, C. Liu and  P. Zhang,  On hydrodynamics of viscoelastic
fluids. {\em Comm. Pure Appl. Math.} 58(2005),  1437--1471.

\bibitem{LZ08}
F. H. Lin, P. Zhang, On the initial-boundary value problem of the
incompressible viscoelastic fluid system. {\em Comm. Pure Appl.
Math.} 61(2008), 539--558.

\bibitem{LM00}
P. L. Lions, N. Masmoudi, Global solutions for some Oldroyd models
of non-Newtonian flows. {\em Chinese Ann. Math. Ser. B}, 21(2000),
131--146.


\bibitem{MT04}
L. Molinet, R. Talhouk,
On the global and periodic regular flows of viscoelastic fluids with a differential
constitutive law. {\em Nonlinear Diff. Equations Appl.} 11(2004), 349--359.

\bibitem{Oldroyd}
J. G. Oldroyd,
Non-Newtonian effects in steady motion of some idealized elastico-viscous liquids.
{\em Proc. Roy. Soc. London}. 245(1958), 278--297.

\bibitem{Talhouk}
R. Talhouk,
Existence locale et unicit\'{e} d'\'{e}coulements de fluides visco\'{e}lastiques dans des
domaines non born\'{e}s. [Local existence and uniqueness of viscoelastic fluid
flows in unbounded domains] {\em C. R. Acad. Sci. Paris} 328 (1999), 87--92.

\end{thebibliography}
\end{document}